\documentclass[11pt,a4paper]{amsart}
\usepackage[T1]{fontenc}
\usepackage[utf8]{inputenc}
\usepackage{lmodern}
\usepackage{amsmath,amssymb,amsthm,amscd,mathtools,mathrsfs}
\usepackage[a4paper,margin=28mm]{geometry}
\usepackage{microtype,needspace}
\usepackage[hidelinks]{hyperref}
\hypersetup{pdftitle={Abelian preperiodic points of rational maps},
 pdfauthor={Andrea Ferraguti and Carlo Pagano},
 pdfsubject={Abelian preperiodic points, canonical heights, and dynamical Galois groups},
 pdfkeywords={arithmetic dynamics, maximal abelian extensions, preperiodic points, canonical heights, Lattes maps, Andrews-Petsche conjecture}}
\allowdisplaybreaks[1]
\numberwithin{equation}{section}
\newtheorem{theorem}{Theorem}[section]
\newtheorem{proposition}[theorem]{Proposition}
\newtheorem{lemma}[theorem]{Lemma}
\newtheorem{corollary}[theorem]{Corollary}
\theoremstyle{definition}
\newtheorem{definition}[theorem]{Definition}
\theoremstyle{remark}
\newtheorem{remark}[theorem]{Remark}
\newtheorem{example}[theorem]{Example}
\DeclareMathOperator{\Gal}{Gal}

\DeclareMathOperator{\PrePer}{PrePer}
\DeclareMathOperator{\End}{End}
\DeclareMathOperator{\Aut}{Aut}
\DeclareMathOperator{\GL}{GL}
\DeclareMathOperator{\SL}{SL}
\DeclareMathOperator{\PGL}{PGL}
\DeclareMathOperator{\Rat}{Rat}
\newcommand{\Q}{\mathbb Q}
\newcommand{\Qbar}{\overline{\mathbb Q}}
\newcommand{\C}{\mathbb C}
\newcommand{\Z}{\mathbb Z}
\renewcommand{\P}{\mathbb{P}}
\newcommand{\Kbar}{\overline K}
\newcommand{\Kab}{K^{\mathrm{ab}}}
\newcommand{\Kcyc}{K^{\mathrm{cyc}}}

\newcommand{\tors}{\mathrm{tors}}
\newcommand{\an}{\mathrm{an}}
\newcommand{\id}{\mathrm{id}}
\newcommand{\abs}[1]{\lvert #1\rvert}
\newcommand{\norm}[1]{\lVert #1\rVert}
\newcommand{\iter}[2]{#1^{\circ #2}}
\newcommand{\cent}[1]{\mathcal C(#1)}
\newcommand{\centinf}[1]{\mathcal C_\infty(#1)}
\newcommand{\hh}[1]{\widehat h_{#1}}

\newcommand{\splitf}{\mathcal F}

\title[$\Kab$ has the Bogomolov property]{$\Kab$ has the Bogomolov property for canonical heights}
\author{Andrea Ferraguti}
\thanks{Andrea Ferraguti: \href{mailto:andrea.ferraguti@unito.it}{\texttt{andrea.ferraguti@unito.it}}}
\author{Carlo Pagano}
\thanks{Carlo Pagano: \href{mailto:carlopagano@google.com}{\texttt{carlopagano@google.com}}}
\makeatletter
\renewcommand{\@setauthors}{%
  \begingroup
  \trivlist
  \centering\normalfont\normalsize
  \@topsep30\p@\relax
  \advance\@topsep by -\baselineskip
  \item\relax
  \begin{minipage}[t]{0.48\textwidth}
    \centering
    \textsc{Andrea Ferraguti}\par\nobreak\vspace{4pt}
    {\small \emph{Universit\`a degli Studi di Torino}\par}
  \end{minipage}\hfill
  \begin{minipage}[t]{0.48\textwidth}
    \centering
    \textsc{Carlo Pagano}\par\nobreak\vspace{4pt}
    {\small \emph{Concordia University}\\\emph{Google DeepMind}\par}
  \end{minipage}
  \endtrivlist
  \endgroup
}
\makeatother
\date{September 2026}
\subjclass[2020]{37P05, 37P15, 37P35, 11R37, 11G05}
\keywords{Preperiodic points, abelian extensions, canonical heights, dynamical Galois groups,  Latt\`es maps}

\begin{document}
\begin{abstract}
We show that for any number field $K$ and any rational map $f$ in $K(x)$ that is not conjugate to a power, (signed) Chebyshev or Latt\`es map, then $K^{\mathrm{ab}}$ has the strong Bogomolov property for the canonical height of $f$. We also classify the pairs $(f,\alpha)$ whose backward orbit contains infinitely many abelian points. This
settles the Andrews--Petsche conjecture to rational maps over a number field and to infinite abelian subsets of backward orbits. 

The authors were led to the main idea of the proof in conversation with \emph{Astra}. The key insight consists of applying the equidistribution results \cite{Yua08}, followed by a classification of $(f,f)$-preperiodic curves \cite{Pak23,Pak20, Bea25} followed by an additional application of equidistribution on a parametrized preperiodic curve. 
\end{abstract}
\maketitle

\section{Introduction and statements}\label{sec:intro}
The purpose of this work is to establish the Bogomolov property for canonical heights of rational maps, extending seminal works of Amoroso--Dvornicich and Amoroso--Zannier \cite{AD00, AZ00}, which dealt with the case of the usual Weil height respectively for the rational numbers and for general number fields. 

To state our result more precisely, let us fix some notation. Let $K$ be a number field, let $\Kbar$ be an algebraic closure, and let
$\Kab$ be its maximal abelian extension. Write $G_K=\Gal(\Kbar/K)$. Let $f\in K(z)$ be a map of degree $d \geq 2$. Two pairs $(f,\alpha)$ and $(g,\beta)$ are
\emph{$L$-conjugate} if
\[
 g=m\circ f\circ m^{-1},\qquad \beta=m(\alpha)
 \quad\text{for some }m\in\PGL_2(L).
\]
We use the same terminology for the maps alone. Set
\[
 \PrePer(f)=\{P\in\P^1(\Kbar):\{\iter f n(P):n\geq0\}
                                      \text{ is finite}\},
 \qquad
 f^{-\infty}(\alpha)=\bigcup_{n\geq0}f^{-n}(\alpha).
\]
A point $\alpha$ is \emph{exceptional for $f$} if its backward orbit is
finite. Let $C_d$ be the Chebyshev polynomial normalized by
$C_d(z+z^{-1})=z^d+z^{-d}$.

\begin{definition}
A rational map $f\in K(z)$ of degree $d$ is \emph{exceptional} if it is
$\Kbar$-conjugate to $z^{\pm d}$, $\pm C_d$ or a Latt\`es map.
\end{definition}
Our main result goes as follows. 
\begin{theorem}\label{thm:small}
Let $f\in K(z)$ be nonexceptional. There is an $\varepsilon>0$ such that
\begin{equation}\label{eq:small-finite}
 \{P\in\P^1(\Kab):\hh f(P)<\varepsilon\}
 \quad\text{is finite}.
\end{equation}
In particular, $\PrePer(f)\cap\P^1(\Kab)$ is finite.
\end{theorem}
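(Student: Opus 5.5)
We argue by contradiction, following the strategy indicated in the abstract. Suppose that for every $\varepsilon>0$ there are infinitely many points of $\P^1(\Kab)$ with $\hh f<\varepsilon$. Then we can extract a sequence $(P_n)$ of pairwise distinct points in $\P^1(\Kab)$ with $\hh f(P_n)\to 0$. The key device is that $\Kab$ is Galois over $K$ with abelian group. Hence for any $\sigma\in G_K$ that is central modulo the commutator subgroup, for instance a lift of a Frobenius at a prime $\mathfrak p$, the action of $\sigma$ on $P_n$ depends only on the image of $\sigma$ in $\Gal(\Kab/K)$. We will pick $\sigma$ so that it acts on all $P_n$ in a way that can be compared with $f$ itself. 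Concretely, we choose a prime $\mathfrak p$ of good reduction for $f$ with residue field of size $q$, and we let $\sigma$ be an element of $\Gal(\Kab/K)$ acting as Frobenius on the decomposition fields. Then $\sigma(P)\equiv P^q \pmod{\mathfrak P}$ for all $P$ integral at $\mathfrak P$ in extensions unramified at $\mathfrak p$. Since $f$ has coefficients in $K$, we have $\hh f(\sigma P_n)=\hh f(P_n)\to 0$.

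\textbf{Step 1 (equidistribution in the plane).} Consider the points $(P_n,\sigma P_n)\in \P^1\times\P^1$. Each one has small canonical height for the product dynamical system $(f,f)$. By Yuan's equidistribution theorem \cite{Yua08}, applied to the Galois orbits (which are infinite after passing to a subsequence, by Northcott and distinctness), the Zariski closure of a generic subsequence is a $(f,f)$-preperiodic subvariety. It cannot be all of $\P^1\times\P^1$. The reason is that there is an auxiliary algebraic relation valid at a finite place: using $\sigma$ as Frobenius gives $v$-adic closeness of $\sigma P$ to $P^q$, or to the reduction of a map, which is incompatible with equidistribution to $\mu_f\times\mu_f$ at $v$. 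For the ramified situation, one replaces Frobenius by inertia/the Amoroso--Dvornicich trick, in which $\sigma(P)^{p}\equiv P^{p}$ modulo $p$. So the closure is a curve $C$, preperiodic under $(f,f)$, containing infinitely many $(P_n,\sigma P_n)$.

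\textbf{Step 2 (classification of preperiodic curves).} By the Pakovich / Beal classification of $(f,f)$-invariant curves \cite{Pak23,Pak20,Bea25}, for nonexceptional $f$ such a curve is, up to iterates, a fiber $\{x\}\times\P^1$ or $\P^1\times\{x\}$, or the graph of a map commuting with an iterate of $f$ in the appropriate sense. Such maps are either iterates or elements of the finite group of automorphisms, or are built from a common iterate, so $C$ is a component of $\{g_1(x)=g_2(y)\}$ with $g_i$ compositional factors of iterates of $f$. The fiber case is excluded since the $P_n$ are distinct. Hence $\sigma P_n$ is tied to $P_n$ via a correspondence of bounded degree built from $f$.

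\textbf{Step 3 (second equidistribution).} Parametrize $C$ (after normalization it is rational, or a finite quotient of the graph) and transport the relation $\sigma P_n\in \psi(P_n)$, where $\psi$ is a correspondence, back to the finite place $v$. There the Frobenius congruence $\sigma P\equiv \phi_q(P)$ gives a second equidistribution statement on the parametrized curve. It forces the $v$-adic equilibrium measure of $f$ to be invariant under, or equal to the pushforward of, a map of degree $q$ or of the form $z\mapsto z^{p}$ in reduction. Varying $\mathfrak p$ over infinitely many primes, which gives unboundedly many degrees, contradicts the fixed bounded degree of $\psi$ unless $f$ shares an equilibrium measure with a power or Lattès-type map. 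Then by Levin--Przytycki-type rigidity $f$ would be exceptional, a contradiction.

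The preperiodic-point finiteness follows since preperiodic points have $\hh f=0$.

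I expect the \textbf{main obstacle} to be Step 1's exclusion of the full surface $\P^1\times\P^1$, together with making Step 3 uniform. The difficulty is producing a genuine height or measure obstruction from the Frobenius congruence at ramified primes. I would first try the Amoroso--Zannier approach of working at a fixed prime $p$ and using $\sigma P^{p}\equiv P^{p}$ to show that $(P_n,\sigma P_n)$ cannot equidistribute to the product measure at $v\mid p$.
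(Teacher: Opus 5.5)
Your Steps 1--2 follow the paper's route in outline: the Amoroso--Zannier congruence, equidistribution on $(\P^1)^2$, trapping the pairs in an $(f,f)$-preperiodic curve, and a Pakovich-type classification. The gap is in Step 3, which is where the contradiction has to come from, and it does not work as proposed.

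At a place $v$ of good reduction, the canonical measure of $f$ is the Dirac mass at the Gauss point, exactly as for $z^d$. That measure is preserved by every map with good reduction. So ``invariance of the $v$-adic equilibrium measure under a degree-$q$ map'' holds automatically and cannot feed a Levin--Przytycki-type rigidity statement, which is archimedean anyway. Varying $\mathfrak p$ does not help either: the curve $C$ depends on $\sigma$, hence on $\mathfrak p$, and the periodic curves have unbounded degree.

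The missing idea is to classify \emph{uniformly first, and only then choose the prime}. Proposition~\ref{prop:uniform} gives one map $F$ with $\deg F=d^N$ and finitely many $A_i$. Every periodic curve dominating both factors is parametrized by some $(u,w)=(A_i\circ\iter F k,A_j)$ or its transpose, with $u\circ F=\iter f N\circ u$. The prime $p$ is then chosen so that all these maps have good reduction, $p\nmid d^N\prod_i\deg A_i$, and distinct family members have distinct reductions. With this choice, the pulled-back form $\mathscr S_p(\mathbf U,\mathbf W)$ has nonzero reduction on every such curve at once:
\begin{itemize}
\item the Frobenius-graph factor survives because $\deg w$ is prime to $p$, while $\deg(\Phi_p\circ u)=p\deg u$;
\item the diagonal factor survives because $\widetilde u\neq\widetilde w$.
\end{itemize}
You then lift the pairs to $t_n$ with $\hh F(t_n)\to0$ (Lemma~\ref{lem:semiheight}). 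Equidistribution for $F$ on the parametrizing $\P^1$ contradicts the norm bound on all conjugates.

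This is exactly where nonexceptionality enters, through the finite-family classification. For $f=z^d$ the curve $y=x^p$ is invariant and reduces to the Frobenius graph; indeed $\sigma\zeta=\zeta^p$ holds exactly for roots of unity of order prime to $p$. So no choice of prime can work there.

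You also omit curves that are strictly preperiodic onto the diagonal: $g(P_n)=g(\sigma_nP_n)$ with $P_n\neq\sigma_nP_n$ and $g=\iter f m$ (e.g.\ $y=-x$ for even $f$). Since $\mathscr S_p$ vanishes identically on $\Delta$, no parametrization argument detects these. The paper treats them separately, and both cases contradict Lemma~\ref{lem:obstruction}:
\begin{itemize}
\item In the unramified case, $\sigma_n$ fixes every conjugate of $g(P_n)$, which gives $\norm{B_p}_v(\gamma g(P_n))\le p^{-1}$.
\item In the ramified case, Lemma~\ref{lem:close} gives $\delta_v(\gamma P_n,\gamma\sigma_nP_n)\le p^{-1/p}$. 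The collision Lemma~\ref{lem:collision} then gives $\norm{J_g}_v(\gamma P_n)\le p^{-1/p}$; it needs separable reduction of $g$, which $p\nmid d$ guarantees.
\end{itemize}

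Two smaller points:
\begin{itemize}
\item A single Frobenius lift gives no uniform congruence at points ramified above $p$, so $\sigma$ must depend on $K(P_n)$. The paper absorbs both cases into the single form $\mathscr S_p$, whose reduction is the Frobenius graph times the $p$-th power of the diagonal.
\item Yuan's theorem only shows that your sequence is not generic; it does not make the trapping curve preperiodic. The paper gets preperiodicity from Lemma~\ref{lem:core}, applied to the forward $(f,f)$-orbit of the pairs. This uses that the norm bound persists under iteration and Galois conjugation, because $\Gal(K(P)/K)$ is abelian.
\end{itemize}
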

Here $\hh f$ is the canonical height associated to $f$.

An algebraic extension $K/\Q$ is said to have the \emph{Bogomolov property} with respect to a height function $h$ if there exists a positive $\varepsilon$ such that the set
$$\{x\in K: h(f)<\varepsilon\}\setminus \{x\in K: h(x)=0\}$$
is finite. Following \cite{FM15}, we say that $K/\Q$ has the \emph{strong Bogomolov property} with respect to a height function $h$ if there exists a positive $\varepsilon$ such that the set
$$\{x\in K: h(f)<\varepsilon\}$$
is finite. Theorem \ref{thm:small} says that $\Kab$
has the strong Bogomolov property for $\hh f$. 

Since
$\hh f(P)=d^{-n}\hh f(\alpha)$ for every $P\in f^{-n}(\alpha)$, Theorem \ref{thm:small} also gives
finiteness of the abelian points in every backward orbit of a
nonexceptional map and in particular unlocks the study of \emph{abelian arboreal} representations. These have been the subject of a conjecture of Andrews--Petsche
\cite[Conjecture~1]{AP20}, which we recall below and settle in the present work in a strong form. 

The Latt\`es case requires an additional definition to pinpoint the abelian class as being basically the one coming from CM-elliptic curves.

\begin{definition}
A pair $(f,\alpha)$, with $f\in K(z)$ a Latt\`es map of degree $d$ and
$\alpha\in\P^1(K)$, is of \emph{CM type} if there are an elliptic curve
$E/K$, an imaginary quadratic field $F\subseteq K$, an order
$\mathcal O\subset F$, an endomorphism $a\in\End_{\Kbar}(E)$ of degree
$d$, and a finite morphism $\theta:E\to\P^1$ defined over $\Kab$ such
that
\[
 \End_{\Kbar}(E)=\mathcal O,\qquad \alpha\in\theta(E_{\tors}),
\]
and
$$
 f\circ\theta=\theta\circ a.
$$
\end{definition}

The following extends~\cite[Theorem~D]{FOZ24}.
\begin{theorem}\label{thm:abelian_lattes}\label{thm:general-lattes}
Let $f\in K(z)$ be a Latt\`es map and let $\alpha\in\P^1(K)$ be
nonexceptional for $f$. The following are equivalent:
\begin{enumerate}
\item[\textup{(i)}] $f^{-\infty}(\alpha)\cap\P^1(\Kab)$ is infinite;
\item[\textup{(ii)}] $f^{-\infty}(\alpha)\subseteq\P^1(\Kab)$;
\item[\textup{(iii)}] $(f,\alpha)$ is of CM type.
\end{enumerate}
\end{theorem}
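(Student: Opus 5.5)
We prove (ii)$\Rightarrow$(i), then (iii)$\Rightarrow$(ii), and finally (i)$\Rightarrow$(iii).

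\emph{(ii)$\Rightarrow$(i).} Since $\alpha$ is nonexceptional, $f^{-\infty}(\alpha)$ is infinite, so (ii) immediately gives (i).

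\emph{(iii)$\Rightarrow$(ii).} Take $E,\mathcal O,a,\theta$ as in the definition of CM type, and write $\alpha=\theta(T)$ with $T\in E_{\tors}$. Any $P\in f^{-n}(\alpha)$ is of the form $P=\theta(Q)$ with $f^{\circ n}(\theta(Q))=\theta(a^n Q)=\alpha$. Hence $a^nQ\in\theta^{-1}(\alpha)$. The fibre $\theta^{-1}(\alpha)$ is a finite set of torsion points: $\theta$ is, up to an automorphism of $\P^1$, the quotient by a finite subgroup of $\Aut(E)$ composed with a translation by torsion, and $T$ is torsion. Since $a$ has degree $d\geq2$, we get $Q\in E_{\tors}$. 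Now use the main theorem of complex multiplication. Because $F\subseteq K$ and $\End(E)=\mathcal O$, the extension $K(E_{\tors})$ is abelian over $K$: Galois acts on $E[N]$ through $(\mathcal O/N)^\times$, which is commutative. The map $\theta$ is defined over $\Kab$, so $P=\theta(Q)\in\P^1(\Kab)$. One must check that the torsion field is abelian over $K$ itself and not only over $F(j)$. This holds because $j(E)\in K$, so $K\supseteq F(j)$, and the Galois image of $G_K$ lands in the commutative group $(\mathcal O\otimes\hat\Z)^\times$.

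\emph{(i)$\Rightarrow$(iii).} This is the main content. Write $f\circ\theta=\theta\circ a$ with $E$, $\theta$, $a$ defined over some finite extension $L\supseteq K$. Abelian points of $f^{-\infty}(\alpha)$ lift along $\theta$ to points of $E$ defined over a bounded-degree extension of $L\Kab$, which is abelian over a finite extension. The infinitely many abelian points $P_n\in f^{-n}(\alpha)$ have $\hh f(P_n)=d^{-n}\hh f(\alpha)\to0$. Pulling back, the points $Q_n$ with $\theta(Q_n)=P_n$ have Néron--Tate height tending to $0$, and they lie in $E(L'^{\mathrm{ab}})$ for some finite $L'$.

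We then split into two cases.

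If $E$ has no CM, apply Zarhin/Baker's result that $E(L'^{\mathrm{ab}})$ has the Bogomolov property for $\hat h_{NT}$ when $E$ is non-CM. Concretely, the set of points of $E(L'^{\mathrm{ab}})$ with height below some $\varepsilon$ consists of finitely many torsion points. So all but finitely many $Q_n$ are torsion, and hence so is $T$, as a multiple of such a $Q_n$. But torsion points of non-CM curves are not abelian in large levels: by Serre's open image theorem the $E[\ell^k]$ fields are non-abelian. This contradicts infinitude.

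So $E$ has CM by an order $\mathcal O$ in some $F$. Baker's theorem for CM curves gives Bogomolov over $F L'^{\mathrm{ab}}$ modulo torsion, so again $T$ must be torsion and $\alpha\in\theta(E_{\tors})$. It remains to show $F\subseteq K$. If not, complex conjugation-type elements of $G_K$ act $F$-semilinearly. The Galois image on $E[\ell^\infty]$ is then the normalizer of a Cartan subgroup, which is non-abelian on deep enough torsion. Combined with the infinitely many abelian preimages, this would force a contradiction. This is a standard computation, as in \cite[Theorem~D]{FOZ24}.

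Finally, we must descend $E$ to $K$ and $\theta$ to $\Kab$. Here we would use that the $\Kbar$-twists of a Lattès model differ by twists whose cocycles are trivialized over $\Kab$. The main obstacle is exactly this descent, together with ensuring that the lifting step controls the field of definition of $Q_n$ uniformly.
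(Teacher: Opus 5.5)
\textbf{Verdict.} There are genuine gaps in (i)$\Rightarrow$(iii): three of its key steps are asserted or left open rather than proved.

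\textbf{The easy implications.} Your (ii)$\Rightarrow$(i) and (iii)$\Rightarrow$(ii) are fine in substance. One correction: your description of $\theta$ as a quotient by a subgroup of $\Aut(E)$ followed by a torsion translation is unjustified. Precomposing $\theta$ with $[2]$ gives equally valid CM-type data of larger degree. All you need is the following: if $\theta(Q)$ is $f$-preperiodic, the $a$-orbit of $Q$ lies in finitely many finite fibres, so $a^mQ=a^nQ$ with $m\neq n$, and $Q$ is torsion.

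\textbf{First gap: the lifting step.} A lift $Q_n\in\theta^{-1}(P_n)$ only lies in an extension of $L\Kab$ of degree at most $\deg\theta$; for $|\Gamma|=2$ one adjoins a square root. Such extensions are not contained in $L'^{\mathrm{ab}}$ for any fixed finite $L'$, so you cannot apply Baker--Silverman and Zarhin as you do.
\begin{itemize}
\item For nontorsion lifts, the paper uses the normalized presentation $f\circ\pi=\pi\circ\psi$ with $\psi=a+b$. For $\sigma\in G_{L^{\mathrm{ab}}}$ one has $\sigma Q=\zeta Q$ with $\zeta\in\Gamma$. Applying $\sigma$ to $\psi^{\circ n}(Q)=a^nQ+b_n\in\pi^{-1}(\alpha)\subseteq E(L)$ gives $a^n(\zeta-1)Q=0$. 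Hence $\zeta=1$ and $Q\in E(L^{\mathrm{ab}})$, and Baker--Silverman then gives $\alpha\in\PrePer(f)$.
\item After that, all lifts are torsion and no such trick is available. What you need instead is finiteness of torsion points of bounded degree over $\Kab$.
\item In the non-CM case this does not follow from the fields $K(E[\ell^k])$ being nonabelian. It follows from $\rho_E(G_{\Kab})=\overline{[G,G]}$ having finite index in $\SL_2(\widehat\Z)$, which forces $[\Kab(P):\Kab]$ to be at least a constant times $n^2$ for $P$ of exact order $n$.
\end{itemize}

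\textbf{Second gap: the step $F\subseteq K$.} This is only asserted, and it comes in the wrong order.
\begin{itemize}
\item Any Galois argument over $K$ needs a model of $E$ over $K$, which you postpone and never construct. The paper first shows $j(E)\in K$. If $|\Gamma|=2$, the four branch points of $\pi$ form the postcritical set of $f$, hence are $G_K$-stable, and $j$ is a symmetric function of their cross-ratio. Otherwise $j\in\{0,1728\}$.
\item ``The normalizer of a Cartan is nonabelian'' does not exclude infinitely many individual torsion points of bounded degree over $\Kab$.
\item The missing idea is this: for $\tau\in G_K\setminus G_{KF}$ and $\sigma\in G_{KF}$ with $\rho(\sigma)=c$, the commutator $[\tau,\sigma]\in G_{\Kab}$ acts by $\bar c\,c^{-1}$. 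So $\rho(G_{\Kab})$ contains an open subgroup $\prod_\ell V_\ell$ of the norm-one torus.
\item For almost all $\ell$, $V_\ell$ acts freely on $E[\ell]\setminus\{0\}$ with orbits of size at least $\ell-1$. This bounds the primes dividing the orders.
\item For each remaining $\ell$, take $v_\ell\in V_\ell$ of infinite order. Each $v_\ell^k-1$ then has finite kernel on $E[\ell^\infty]$, which bounds the $\ell$-primary parts.
\item Pointing to \cite[Theorem~D]{FOZ24}, which the present theorem generalizes, does not supply this.
\end{itemize}

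\textbf{Third gap: descent of $\theta$.} This is not a cocycle computation. Once $F\subseteq K$, CM theory gives $E_{\tors}\subseteq E(\Kab)$. For $\sigma\in G_{\Kab}$, the maps $\theta^\sigma$ and $\theta$ agree at the infinitely many torsion lifts of your abelian points. Hence $\theta^\sigma=\theta$ and $\theta$ is defined over $\Kab$, which is what actually yields (iii).
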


Combining these results with the power and Chebyshev cases gives the
following extension of the Andrews--Petsche conjecture
\cite[Conjecture~1]{AP20}.
\begin{theorem}\label{general_ap}
Let $f\in K(z)$ have degree $d\geq2$, and let $\alpha\in\P^1(K)$ be
nonexceptional for $f$. Then $f^{-\infty}(\alpha)\cap\P^1(\Kab)$ is
infinite if and only if one of the following holds:
\begin{enumerate}
\item[\textup{(i)}] $(f,\alpha)$ is $\Kab$-conjugate to $(z^{\pm d},\zeta)$,
where $\zeta$ is a root of unity;
\item[\textup{(ii)}] $(f,\alpha)$ is $\Kab$-conjugate to
$(\pm C_d,\zeta+\zeta^{-1})$, where $\zeta$ is a root of unity;
\item[\textup{(iii)}] $f$ is a Latt\`es map and $(f,\alpha)$ is of CM type.
\end{enumerate}
\end{theorem}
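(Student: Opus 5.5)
We prove Theorem~\ref{general_ap} by reducing to the three earlier results according to the type of $f$. The sufficiency direction and the split into cases are routine; the only new work is the power and Chebyshev cases, and the main obstacle is pinning down the $\Kab$-conjugacy there.

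\textbf{Nonexceptional $f$.} Every $P\in f^{-n}(\alpha)$ satisfies $\hh f(P)=d^{-n}\hh f(\alpha)$. So all but finitely many points of $f^{-\infty}(\alpha)$ have canonical height below the $\varepsilon$ of Theorem~\ref{thm:small}. That theorem then leaves only finitely many of them in $\P^1(\Kab)$, and since $\alpha$ is nonexceptional, none of (i)--(iii) can hold. The infinite case therefore forces $f$ to be exceptional.

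\textbf{Latt\`es $f$.} Here the equivalence is exactly (i)$\Leftrightarrow$(iii) of Theorem~\ref{thm:general-lattes}.

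\textbf{Power or Chebyshev $f$: sufficiency.} Suppose $(f,\alpha)=m^{-1}\circ(g,\beta)\circ m$ with $m\in\PGL_2(\Kab)$ and $g\in\{z^{\pm d},\pm C_d\}$. Conjugation by $m$ preserves $\Kab$-rationality, so it suffices to treat $(g,\beta)$ directly.
\begin{itemize}
\item For $g=z^{\pm d}$ and $\beta=\zeta$, the backward orbit consists of roots of unity, which lie in $\Q^{\mathrm{ab}}\subseteq\Kab$.
\item For $g=\pm C_d$ and $\beta=\zeta+\zeta^{-1}$, the semiconjugacy $C_d(z+z^{-1})=z^d+z^{-d}$ shows that preimages have the form $\xi+\xi^{-1}$ with $\xi$ a root of unity. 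The sign $-C_d$ is handled the same way, using $-(z^d+z^{-d})=(-z)^{d}+(-z)^{-d}$ for odd $d$, or a twist by a root of unity in general.
\end{itemize}

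\textbf{Power or Chebyshev $f$: necessity.} Assume $f^{-\infty}(\alpha)\cap\P^1(\Kab)$ is infinite and $f$ is $\Kbar$-conjugate to $g=z^{\pm d}$ or $\pm C_d$. I would proceed in three steps.

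\emph{Step 1: descend the conjugacy to $\Kab$.} The two totally invariant points (power case), or the point $\infty$ together with the critical structure (Chebyshev case), form a $G_K$-stable set of at most two points. Hence the points of this set are defined over a field of degree at most $2$ over $K$, which lies in $\Kab$. The $\Kbar$-conjugacies are a torsor under the finite group $\Aut(g)$, and $G_K$ acts on this torsor through a finite quotient. I expect that, after normalizing the exceptional points to $0,\infty$ (resp. $\infty$), the remaining freedom is a scaling $z\mapsto cz$ with $c^{d-1}$ determined by $f$ over $\Kab$. This would force $c\in\Kab$, because Kummer extensions of abelian fields by $(d\mp1)$-th roots need not be abelian, but the relevant constant already lies in $K$. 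I expect this descent to be the main obstacle: the twists of $z^d$ over $K$ correspond to cocycles valued in $\mu_{d\mp1}$ (and $\pm1$ for Chebyshev), and these split over $K(\mu_\infty)\subseteq\Kab$.

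\emph{Step 2: reduce to Weil height.} Now $(f,\alpha)$ is $\Kab$-conjugate to $(g,\beta)$ with $\beta\in\Kab$, and $g^{-\infty}(\beta)$ contains infinitely many points of $\Kab$. For $g=z^{\pm d}$ we have $\hh g=h$, the Weil height. The points of $g^{-n}(\beta)$ have Weil height $d^{-n}h(\beta)\to0$. Since they lie in $\Kab$, Amoroso--Zannier \cite{AZ00} forces $h(\beta)=0$.

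\emph{Step 3: conclude.} By Kronecker, $\beta$ is then a root of unity or $0,\infty$, and the latter are exceptional, so (i) holds. For Chebyshev, write $\beta=\gamma+\gamma^{-1}$. Abelian preimages $\xi+\xi^{-1}$ generate extensions of degree at most $2$ containing $\xi$, still inside $\Kab$, and $\hh{C_d}(\xi+\xi^{-1})=h(\xi)$. The same argument then gives that $\gamma$ is a root of unity, which is (ii).
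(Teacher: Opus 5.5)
There is a genuine gap in Step~1 of the necessity direction, in the power case. You assert that the $\Kbar$-conjugacy descends to $\Kab$ without using the hypothesis on the backward orbit. Unconditionally, that claim is false.

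After moving the exceptional points to $0,\infty$ over a quadratic extension $K'$, you get $cz^{\pm d}$ with $c\in K'^{\times}$. Any conjugacy to $z^{\pm d}$ then requires a $(d\mp1)$-th root of $c$. So the twist class is a Kummer class, and $K(\mu_\infty,c^{1/(d\mp1)})$ is abelian over $K(\mu_\infty)$ but in general not over $K$. The twists therefore do not split over $K(\mu_\infty)$, contrary to what you claim.

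Concretely, take $f=2z^4$ over $\Q$. It is conjugate to $z^4$ only via $z\mapsto\lambda z$ or $z\mapsto\lambda/z$ with $\lambda^3=2^{\pm1}$. Since $\Q(\sqrt[3]{2})/\Q$ is not normal, no $\Q^{\mathrm{ab}}$-conjugacy exists. Your own remark that Kummer extensions need not be abelian says as much: knowing $c^{d-1}\in K$ does not give $c\in\Kab$. Steps~2 and~3 apply Amoroso--Zannier to $g^{-\infty}(\beta)$ inside $\Kab$, so they inherit the gap.

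The repair, which is the paper's route, is to reverse the order.
\begin{enumerate}
\item Let $L/K$ be a finite extension over which $m$ is defined. The points $m(x_i)$ then lie in $g^{-\infty}(\beta)\cap\P^1(L\Kab)\subseteq\P^1(L^{\mathrm{ab}})$.
\item Amoroso--Zannier over $L$, or \cite[Lemma~3.3]{FOZ24}, forces $\beta=\zeta$, resp.\ $\beta=\zeta+\zeta^{-1}$. Hence all of $g^{-\infty}(\beta)$ lies in $\P^1(\Q^{\mathrm{ab}})$.
\item Now $m$ maps three distinct points $x_i\in\P^1(\Kab)$ to three points of $\P^1(\Q^{\mathrm{ab}})$. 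A M\"obius map is determined by three values, so $m\in\PGL_2(\Kab)$.
\end{enumerate}
The descent thus comes from the hypothesis, not from twist theory.

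There is also a slip in Step~3. Knowing $\xi+\xi^{-1}\in\Kab$ only places $\xi$ in an extension of degree at most $2$ of $\Kab$, not inside $\Kab$. You need the relative form of \cite{AZ00}: a lower bound for $h(\xi)$ when $[\Kab(\xi):\Kab]$ is bounded, with $\xi$ not a root of unity.
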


Andrews--Petsche~\cite[Conjecture~1]{AP20} proposed the classification of abelian arboreal representations over number fields, in the polynomial case, as essentially saying that they all originated from power and Chebyshev polynomials. They proved it for stable quadratic polynomials over $\Q$ and
for maps geometrically conjugate to powers or Chebyshev polynomials
\cite[Theorems~1, 12 and~13]{AP20}. The authors of this paper removed the stability assumption in the quadratic case over $\Q$ and proved that abelianity forces quadratic polynomials to be postcritically finite \cite[Theorems~1.2--1.3]{FP20} and settled the conjecture for quadratic polynomials over $\Q$. They later treated unicritical polynomials with periodic critical point over arbitrary number fields, and monic unicritical polynomials over number fields of degree at most two~\cite[Theorems~1.2--1.3]{FP23}.

Ferraguti--Ostafe--Zannier~\cite{FOZ24} established the reduction to PCF maps in general and proved the Andrews--Petsche conjecture in a strong form over $\Q$. 

Leung--Petsche~\cite[Theorem~1]{LP25} proved
the conjecture whenever the base point is not preperiodic. 

Looper~\cite[Theorem~1.5]{Loo21} proved the strong Bogomolov property for polynomials with a finite superattracting periodic point and a nonarchimedean place without potential good reduction. Her proof combines equidistribution, the geometry of the filled Julia set, and
local restrictions on conjugates in abelian extensions, which will feature also in a crucial way in this work. 

Dvornicich--Zannier~\cite[Theorem~2]{DZ07} proved similar results in the more restrictive generality of \emph{cyclotomic points}.

While preparing this work we have been informed by Zhuchao Ji, Jiarui Song and Junyi Xie about their work also establishing Andrews--Petsche's conjecture. We are grateful to them for correspondence on our respective works and for making it possible to post our works simultaneously.

\subsection*{Structure of the proof}
We begin with a very high level summary. The first step of the proof is similar to what happens in \cite{Loo21}. One finds that equidistribution theorems \cite{Yua08} applied to the map $(f,f)$ and to points $(P_n,\text{Frob}_p(P_n))$, for a suitable auxiliary prime $p$, tend to conflict with the congruences one has from \cite{AZ00}. The only way this conflict is resolved is in case the sequence of points is trapped in an $(f,f)$-periodic curve, which we can parametrize using \cite{Pak23,Pak20, Bea25}. We then re-apply equidistribution to this curve and conclude. 

We now provide some more details on how this logic unfolds. Let $f$ be a nonexceptional map, suppose a sequence $\{P_n\}_{n \geq 1}$ of distinct abelian points have canonical heights tending to zero. We begin with the key observation in ~\cite{AZ00}, the fact that $P_n$ is in $K^{\text{ab}}$ provides strong congruence between $P_n^{p}$ and a conjugate $\sigma(P_n)$ modulo \emph{all} primes above suitable prime $p$ of good reduction: this is the key property that allows ~\cite{AZ00} to show the Bogomolov property for the usual Weil height on $K^{\text{ab}}$. Furthermore the congruence is preserved under iteration. On the other hand equidistribution of the small points $(P_n,\sigma(P_n))$ forbids these congruences unless infinitely many pairs satisfy a fixed algebraic relation~\cite{Yua08}. In the latter case one would obtain an $(f,f)$-stable curve containing infinitely many pairs and hence its forward images under $(f,f)$ must eventually repeat.

We then parametrize the resulting periodic curve by $\P^{1}$, using the curve classification~\cite{Pak23,Pak20,Bea25}. A feature of this classification is that it allows us to choose the prime so that essentially the congruence yields a new nontrivial constraint \emph{also when} restricted to the parametrized curve. We apply ~\cite{Yua08} \emph{once more} on this copy of $\P^{1}$ and get a contradiction. It is at this stage of the proof, in the second application of ~\cite{Yua08} that we use that $f$ is nonexceptional. 

\subsection*{Human--AI collaboration} We owe to \emph{Astra} the key insight to blend equidistribution results \cite{Yua08} with the classification of periodic curves \cite{Pak23,Pak20,Bea25} in the way outlined above. We used \emph{Astra}, \emph{Fable} and internal agents at Google DeepMind to draft and review this work. 

\subsection*{Acknowledgments} The first author is grateful to Alina Ostafe and Umberto Zannier for several conversations about these problems, and has been partially supported by the ``National Group for Algebraic and Geometric Structures, and their Applications" (GNSAGA - INdAM). The second author is grateful to Nicole Looper for past conversations surrounding \cite{Loo21} and the \emph{Superhuman reasoning team} at Google DeepMind for several conversations regarding the use of artificial intelligence in mathematics. 

\section{Preliminaries}\label{sec:prelim}

Write $h$ for the absolute logarithmic Weil height. For $f\in K(z)$
of degree $d\geq2$, the canonical height
$\hh f(P)=\lim_{n\to\infty}d^{-n}h(\iter f n(P))$ satisfies
\cite[Chapter~3]{Sil07}
\begin{equation}\label{eq:canonical}
 \hh f\geq0,\qquad \hh f\circ f=d\hh f,\qquad
 \hh f=h+O_f(1),\qquad
 \hh f(P)=0\Longleftrightarrow P\in\PrePer(f).
\end{equation}
It is $G_K$-invariant. For a nonconstant $u\in\Kbar(z)$,
$h(u(P))=(\deg u)h(P)+O_u(1)$.

At each finite place $v$, fix $\Kbar\hookrightarrow\C_v$, where
$\C_v$ is the completed algebraic closure of $K_v$; a tilde denotes reduction.
Put $X_m=(\P^1)^m$, $m\in\{1,2\}$, and
$X_m^{\an}=(X_m\otimes_K\C_v)^{\an}$.
An affine chart of this Berkovich space consists of multiplicative
seminorms $|\cdot|_\zeta$ on $\C_v[T_1,\ldots,T_m]$ extending
$|\cdot|_v$, with the topology of pointwise convergence.
The charts glue to a compact Hausdorff space. An ordinary point
$(t_1,\ldots,t_m)$ gives the seminorm
$g\mapsto|g(t_1,\ldots,t_m)|_v$.

For an homogeneous form $H$ of multidegree $(a_1,\ldots,a_m)$,
we use the normalized absolute value
\begin{equation}\label{eq:model}
 \norm H_v(P_1,\ldots,P_m)
 =\frac{|H(\mathbf x_1,\ldots,\mathbf x_m)|_v}
 {\prod_{j=1}^m\max(|x_{j,0}|_v,|x_{j,1}|_v)^{a_j}},
 \qquad P_j=[x_{j,0}:x_{j,1}].
\end{equation}
The denominator makes this independent of the choices of homogeneous
coordinates. It extends continuously to $X_m^{\an}$: on a standard
affine chart, if $h_H$ is the dehomogenization of $H$, its value is
$|h_H|_\zeta/\prod_j\max(1,|T_j|_\zeta)^{a_j}$.

The \emph{Gauss point} $\xi_m\in X_m^{\an}$ is defined by the seminorm
\[
 \left|\sum_I c_I T^I\right|_{\xi_m}=\max_I|c_I|_v.
\]
In other words, at this point the absolute value of a polynomial is the
largest absolute value of its coefficients. Since $|T_j|_{\xi_m}=1$,
\begin{equation}\label{eq:gauss-value}
 \norm H_v(\xi_m)=1
 \qquad\text{if $H$ is integral at $v$ and $\widetilde H\ne0$.}
\end{equation}
Indeed, all the coefficients then have absolute value at most one, and
at least one has absolute value one.

Now let $R\in K(z)$ have degree $r\geq2$ and good reduction at $v$
(see Definition~\ref{def:good_red}). On $X_m$, consider the map
$(R,\ldots,R)$ and the height $\sum_{j=1}^m\hh R(P_j)$.
At a place of good reduction, the probability measure appearing in
equidistribution for this map is
\begin{equation}\label{eq:gauss-measure}
 \mu_{(R,\ldots,R),v}=\delta_{\xi_m}.
\end{equation}
Here $\delta_{\xi_m}$ is the \emph{Gauss measure}: it assigns mass one
to the Gauss point $\xi_m$ and mass zero to its complement. Thus, for
every continuous real-valued function $\varphi$ on $X_m^{\an}$,
\[
 \int_{X_m^{\an}}\varphi\,d\delta_{\xi_m}=\varphi(\xi_m).
\]
The description~\eqref{eq:gauss-measure} is the good-reduction case of
the construction in~\cite[Definition~2.4]{CL06}.

A sequence is \emph{generic} if every proper closed subvariety
contains only finitely many terms. Yuan's
theorem~\cite[Theorem~3.7]{Yua08} implies that, for every generic
sequence $P_n\in X_m(\Kbar)$ with
$\sum_j\hh R(P_{n,j})\to0$, the uniform probability measures on the
finite Galois orbits $G_K\cdot P_n$ converge weakly to the Gauss measure.
Explicitly, for every continuous real-valued function $\varphi$,
\[
 \frac{1}{\#(G_K\cdot P_n)}
 \sum_{Q\in G_K\cdot P_n}\varphi(Q)
 \longrightarrow\varphi(\xi_m).
\]
In particular, taking $\varphi=\norm H_v$ with $H$ as
in~\eqref{eq:gauss-value}, these averages tend to one.
The orbits here are those of the whole tuples: each $\gamma\in G_K$
acts simultaneously on all coordinates. The space $X_m^{\an}$ is
throughout the analytification of the algebraic product $X_m$.

\begin{lemma}\label{lem:semiheight}
Let $u,R\in\Kbar(z)$, with $u$ nonconstant, and let $N\geq1$ satisfy
\[
 u\circ R=\iter f N\circ u,
 \qquad \deg R=d^N\geq2.
\]
Then
$$
 \hh f(u(t))=(\deg u)\hh R(t)
 \qquad(t\in\P^1(\Kbar)).
$$
In particular, every lift under $u$ of an $f$-preperiodic point is
$R$-preperiodic.
\end{lemma}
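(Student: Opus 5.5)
The plan is to define $\phi(t)=\hh f(u(t))/\deg u$ on $\P^1(\Kbar)$ and show that $\phi$ has the two properties that characterize $\hh R$. First, $\phi$ is a Weil height for $R$, i.e. $\phi=h+O(1)$. Second, $\phi\circ R=d^N\phi$. Together these give $\phi=\hh R$ by the standard uniqueness argument for canonical heights.

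\textbf{Step 1 (Weil height).} By \eqref{eq:canonical} we have $\hh f=h+O_f(1)$. The preliminaries also give $h(u(t))=(\deg u)h(t)+O_u(1)$, which is valid for nonconstant $u\in\Kbar(z)$. Combining the two yields
\[
 \phi(t)=\frac{h(u(t))+O_f(1)}{\deg u}=h(t)+O_{f,u}(1).
\]

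\textbf{Step 2 (functional equation).} From $u\circ R=\iter f N\circ u$ and $\hh f\circ f=d\,\hh f$ we get
\[
 \phi(R(t))=\frac{\hh f\bigl(\iter f N(u(t))\bigr)}{\deg u}=d^N\phi(t)=(\deg R)\,\phi(t).
\]

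\textbf{Step 3 (uniqueness).} Set $\psi=\phi-\hh R$. By Step 1 and $\hh R=h+O_R(1)$, $\psi$ is bounded on $\P^1(\Kbar)$, say $|\psi|\le C$. By Step 2 and the corresponding identity for $\hh R$, we have $\psi\circ R=d^N\psi$. Iterating gives
\[
 |\psi(t)|=d^{-Nn}\,\bigl|\psi(\iter R n(t))\bigr|\le C\,d^{-Nn}\quad\text{for every } n.
\]
Letting $n\to\infty$ (recall $d^N\ge2$) gives $\psi\equiv0$. Hence $\hh f(u(t))=(\deg u)\hh R(t)$.

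\textbf{Final claim.} If $u(t)\in\PrePer(f)$, then $\hh f(u(t))=0$, so $\hh R(t)=0$. By \eqref{eq:canonical} applied to $R$ (degree $\ge2$), $t\in\PrePer(R)$.

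The main point requiring care is that \eqref{eq:canonical} and the estimate $h(u(P))=(\deg u)h(P)+O_u(1)$ are stated for maps over $K$, whereas $u$ and $R$ are only defined over $\Kbar$. I would handle this by working over a number field $L\supseteq K$ containing all the coefficients of $u$ and $R$. The absolute Weil height is insensitive to the base field, so the canonical height properties for $R$ over $L$ and the height estimate for $u$ hold verbatim. No other obstacle is expected.
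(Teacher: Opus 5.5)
Your proof is correct and is essentially the paper's argument. The paper divides $h(u(\iter R n(t)))=(\deg u)h(\iter R n(t))+O_u(1)$ by $d^{Nn}$ and passes to the limit using $u\circ\iter R n=\iter f{Nn}\circ u$; you instead check that $\hh f\circ u/\deg u$ is a Weil height satisfying $R$'s functional equation and run the standard uniqueness argument, which is the same telescoping limit repackaged (and your reduction to a number field containing the coefficients of $u$ and $R$ is the right justification for applying \eqref{eq:canonical} to $R$).
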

\begin{proof}
Put $D=d^N$. Divide
$h(u(\iter R n(t)))=(\deg u)h(\iter R n(t))+O_u(1)$ by $D^n$
and pass to the limit, using $u\circ\iter R n=\iter f{Nn}\circ u$.
The last assertion follows from~\eqref{eq:canonical}.
\end{proof}

\begin{lemma}\label{lem:core}
Let $X$ be an integral projective variety over $\Kbar$, let
$T:X\to X$ be a morphism, and let
$\mathfrak h:X(\Kbar)\to\mathbb R_{\geq0}$ satisfy $\mathfrak h(Tx)=q\mathfrak h(x)$ for some
$q>1$. Let $\mathcal S\subseteq X(\Kbar)$ satisfy
$T(\mathcal S)\subseteq\mathcal S$. Assume that $\mathcal S$ contains
no generic sequence $\{x_n\}$ with $\mathfrak h(x_n)\to 0$. Then there is a
proper closed subset $Z\subsetneq X$ such that
\[
 T(Z)\subseteq Z,
\]
and every sequence $\{x_n\}$ in $\mathcal S$ such that $\mathfrak h(x_n)\to 0$ is
eventually contained in $Z$.
\end{lemma}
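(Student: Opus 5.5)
The plan is to take $Z$ to be the Zariski closure of a suitable $T$-stable set built from accumulation loci of small sequences, and then use Noetherianity. First define $W$ as the set of points $x\in X(\Kbar)$ that can be written as a term of some sequence, or more precisely, let $\mathcal W$ be the collection of all sequences $\{x_n\}\subseteq\mathcal S$ with $\mathfrak h(x_n)\to0$. For each such sequence, let $Z_{\{x_n\}}$ be the intersection over $N$ of the Zariski closures of $\{x_n:n\geq N\}$. By Noetherianity this decreasing chain stabilizes, so $Z_{\{x_n\}}$ is the Zariski closure of a tail. It is proper: otherwise the tail is Zariski dense in the integral $X$, and every subsequence is dense too. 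A standard diagonal argument then extracts a generic subsequence; one enumerates the countably many proper closed subvarieties defined over a countable field of definition, which is fine since everything is over $\Kbar$. This contradicts the hypothesis.

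Next put $Z_0$ equal to the Zariski closure of the union of all $Z_{\{x_n\}}$. The key point is that $Z_0$ is proper. Suppose not. Since a closed set has finitely many irreducible components, there are finitely many sequences whose closures already give $Z_0$ up to components, and infinitely many would be needed only if each added something new. Noetherianity shows that the closure of the union of the $Z_{\{x_n\}}$ equals the closure of a finite union. If that finite union were all of $X$, then, $X$ being irreducible, a single $Z_{\{x_n\}}$ would equal $X$, which was excluded. Hence $Z_0\subsetneq X$.

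Every sequence in $\mathcal S$ with $\mathfrak h\to0$ is eventually in $Z_0$. Indeed, if infinitely many terms lay outside $Z_0$, that subsequence is again small, so its tail closure lies in $Z_0$, forcing all but finitely many of those terms into $Z_0$, a contradiction.

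For $T$-stability, the relation $\mathfrak h(Tx_n)=q\mathfrak h(x_n)\to0$ together with $T(\mathcal S)\subseteq\mathcal S$ shows that $\{Tx_n\}$ is again an admissible sequence. Therefore $T(Z_{\{x_n\}})\subseteq \overline{T(\text{tail})}\subseteq Z_{\{Tx_n\}}$ up to shrinking the tail, since $T$ is a morphism of projective varieties and hence closed. It follows that $T(Z_0)\subseteq Z_0$, using that the image of a closure is contained in the closure of the image. The main obstacle is the diagonal extraction of a generic subsequence from a Zariski-dense sequence all of whose subsequences are dense. I would handle it by enumerating the countably many proper subvarieties $V_1,V_2,\dots$ and choosing $x_{n_k}\notin V_1\cup\dots\cup V_k$, which is possible because the tail is dense in the irreducible $X$ and so is not contained in a finite union of proper closed sets.
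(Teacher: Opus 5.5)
\textbf{There is a genuine gap in the step showing $Z_0\subsetneq X$.} You claim that Noetherianity makes the closure of $\bigcup Z_{\{x_n\}}$ equal to the closure of a finite subunion. That is false. A Noetherian space has the descending chain condition on closed sets, not the ascending one. So the closure of an infinite union of closed sets need not equal any finite subunion: think of infinitely many points of $\P^1$, or infinitely many distinct curves in $(\P^1)^2$ with Zariski-dense union. The finiteness of the set of irreducible components of $Z_0$ does not help either: in the case you must exclude, $Z_0=X$ has a single component.

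The real danger is infinitely many admissible sequences, each with proper limit locus $Z_{\{x_n\}}$ (say each on a different curve), whose loci are jointly dense. Noetherianity cannot rule this out. You have to use the hypothesis on $\mathcal S$ a second time, diagonalizing \emph{across} sequences rather than within one.

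\textbf{How to repair it.} Suppose $Z_0=X$ and enumerate the proper closed subvarieties $V_1,V_2,\dots$. Since $X$ is irreducible, for each $k$ the union of the loci is not contained in the proper closed set $V_1\cup\dots\cup V_k$. Hence some admissible sequence has $Z_{\{x_n\}}\not\subseteq V_1\cup\dots\cup V_k$. Because $Z_{\{x_n\}}$ lies in the closure of every tail of its sequence, that sequence has a term $y_k\notin V_1\cup\dots\cup V_k$ with $\mathfrak h(y_k)<1/k$. Then $\{y_k\}$ is a generic sequence in $\mathcal S$ with $\mathfrak h\to0$, a contradiction.

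Alternatively, note that $Z_0\subseteq\bigcap_j\overline{\{x\in\mathcal S:\mathfrak h(x)<q^{-j}\}}$. The paper takes this intersection itself as $Z$. Its closures form a \emph{descending} chain, so Noetherianity applies legitimately. A single diagonal extraction shows it is proper, and $T$-stability follows because $T$ maps the $(j+1)$-st level set into the $j$-th. This avoids the per-sequence bookkeeping altogether.

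\textbf{The rest of your argument is correct.} This includes eventual containment in $Z_0$ and $T(Z_0)\subseteq Z_0$ via $T(Z_{\{x_n\}})\subseteq Z_{\{Tx_n\}}$; continuity of $T$ suffices there, closedness is not needed. One side remark is false: a sequence with dense tails can have non-dense subsequences. It is harmless, since your actual extraction only uses tails.
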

\begin{proof}
If $\dim X=0$, then $\mathfrak h=0$ and the hypothesis forces
$\mathcal S=\varnothing$; take $Z=\varnothing$. Assume $\dim X>0$.
For $j\geq0$ put
\[
 \mathcal S_j=\{x\in\mathcal S:\mathfrak h(x)<q^{-j}\},
 \qquad Z_j=\overline{\mathcal S_j}^{\,\mathrm{Zar}}.
\]
The descending sequence of closed sets $Z_j$ stabilizes, since $X$
is Noetherian. Write $Z_j=Z$ for all $j\geq j_0$. If $Z=X$, each
$\mathcal S_j$ is Zariski dense. Enumerate the proper closed
subvarieties of $X$, which form a countable collection, and choose
$x_j\in\mathcal S_j$ outside the first $j$ of them. This is a generic sequence with
$\mathfrak h(x_j)\to0$, a contradiction. Thus $Z\neq X$.

The relation $T(\mathcal S_{j+1})\subseteq\mathcal S_j$ gives
$T(Z_{j+1})\subseteq Z_j$: the closed set $T^{-1}(Z_j)$ contains
$\mathcal S_{j+1}$ and hence its closure. For $j\geq j_0$ this is
$T(Z)\subseteq Z$. Finally, a sequence with $\mathfrak h(x_n)\to 0$
is eventually in $\mathcal S_{j_0}\subseteq Z$.
\end{proof}

\begin{remark}\label{rem:finite-components}
If $X$ is a surface, $T$ is finite, and $Z\subsetneq X$ is a closed subset 
with $T(Z)\subseteq Z$, every component of $Z$ consisting of a curve is in fact a preperiodic curve. Indeed, a finite morphism sends a curve onto a curve, and its image
is still a component of $Z$ that is a curve. The finiteness of the set of components gives the desired conclusion. 
\end{remark}

\section{Good reduction and equidistribution}\label{sec:equid}

Fix a finite place $v$. Write $\mathcal O_v$ for the valuation ring of
$K_v$, and use a tilde for reduction. For $\mathbf x=(x_0,x_1)$,
put $\norm{\mathbf x}_v=\max(\abs{x_0}_v,\abs{x_1}_v)$. The chordal
distance is
$$
 \delta_v([x_0:x_1],[y_0:y_1])
 =\frac{\abs{x_0y_1-x_1y_0}_v}
 {\norm{\mathbf x}_v\norm{\mathbf y}_v}.
$$

\begin{definition}\label{def:good_red}
    A degree-$e$ rational map $u\in K(z)$ has \emph{good reduction at $v$} if it has a
homogeneous lift $[U_0:U_1]$ with coefficients in $\mathcal O_v$ and
unit resultant.
\end{definition}
 If $u$ has good reduction, its reduction modulo the maximal ideal of $\mathcal O_v$ has degree $e$, and for every $\mathbf t\in\C_v^2$,
\begin{equation}\label{eq:good-norm}
 \max\{\abs{U_0(\mathbf t)}_v,\abs{U_1(\mathbf t)}_v\}
 =\norm{\mathbf t}_v^e.
\end{equation}

\begin{lemma}\label{lem:obstruction}
Let $R\in K(z)$ have degree at least two and good reduction at $v$,
and let $H$ be an integral
multihomogeneous form on $X_m$, $m\in\{1,2\}$, with
$\widetilde H\neq0$. Fix $0<\rho<1$. There is no generic sequence
$P_n\in X_m(\Kbar)$ satisfying
$$
 \sum_{j=1}^m\hh R(P_{n,j})\longrightarrow0,
 \qquad
 \norm H_v(\gamma P_n)\leq\rho
 \quad(n\geq1,\ \gamma\in G_K).
$$
For $m=1$, every sequence of distinct points is generic. A set of
coordinatewise preperiodic points satisfying the same norm bound is
not Zariski dense; for $m=1$ it is finite.
\end{lemma}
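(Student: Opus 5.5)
The plan is to argue by contradiction using the equidistribution statement recalled after \eqref{eq:gauss-measure}. Suppose $P_n$ is a generic sequence in $X_m(\Kbar)$ with $\sum_j\hh R(P_{n,j})\to0$ and $\norm H_v(\gamma P_n)\leq\rho$ for all $n$ and all $\gamma\in G_K$. The function $\varphi=\norm H_v$ is continuous on $X_m^{\an}$: on each standard affine chart it equals $|h_H|_\zeta/\prod_j\max(1,|T_j|_\zeta)^{a_j}$, and these expressions agree on overlaps because the homogeneous expression \eqref{eq:model} does not depend on the choice of coordinates. By Yuan's theorem \cite[Theorem~3.7]{Yua08}, in the form recalled above and with the limit measure identified as $\delta_{\xi_m}$ by \eqref{eq:gauss-measure} (this uses good reduction of $R$ at $v$), the Galois averages satisfy
\[
 \frac{1}{\#(G_K\cdot P_n)}\sum_{Q\in G_K\cdot P_n}\norm H_v(Q)\longrightarrow\norm H_v(\xi_m)=1,
\]
where the last equality is \eqref{eq:gauss-value}, since $H$ is integral and $\widetilde H\neq0$. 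Every point $Q$ of the orbit has the form $\gamma P_n$, so each term is at most $\rho$, and hence each average is at most $\rho<1$. This contradicts the limit being $1$.

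The remaining assertions are formal. For $m=1$, every proper closed subset of $\P^1$ is finite, so a sequence of distinct points meets each proper closed subset in only finitely many terms and is therefore generic. Now suppose a set $\mathcal S$ of coordinatewise preperiodic points satisfying the norm bound is Zariski dense in $X_m$. The proper closed subvarieties of $X_m$ form a countable collection (here we work over $\Kbar$), so we can choose $x_j\in\mathcal S$ avoiding the first $j$ of them, exactly as in the proof of Lemma~\ref{lem:core}. This produces a generic sequence in $\mathcal S$, and its height sum is identically $0$ by \eqref{eq:canonical}, contradicting the first part. For $m=1$, a set that is not Zariski dense is finite.

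One point needs care: the norm bound must hold for all Galois conjugates. In the statement it is imposed for all $\gamma\in G_K$, and for the set version I will read the hypothesis the same way, namely that the set is $G_K$-stable and satisfies the bound pointwise. The preperiodic locus is $G_K$-stable because $R$ is defined over $K$, so this reading is natural.

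The main technical point is justifying that $\norm H_v$ is a legitimate continuous test function, and that Yuan's theorem applies with the product height and gives the limit measure $\delta_{\xi_m}$. Both are already recorded in the preliminaries, so I expect no real obstacle: the proof is a direct comparison between the equidistribution limit $1$ and the uniform bound $\rho<1$.
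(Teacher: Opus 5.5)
Your proposal is correct and follows essentially the same route as the paper. The Galois-orbit averages of $\norm H_v$ are bounded by $\rho$, while Yuan's equidistribution toward $\delta_{\xi_m}$ together with \eqref{eq:gauss-value} forces them to tend to $\norm H_v(\xi_m)=1$. The set version then follows by extracting a generic sequence of height zero from a Zariski-dense set over the countable field $\Kbar$.
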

\begin{proof}
The orbit averages of $\norm H_v$ are at most $\rho$.
Equidistribution and~\eqref{eq:gauss-value} make them tend to one,
a contradiction. A Zariski-dense set over the countable field
$\Kbar$ contains a generic sequence, so the same argument applies
to coordinatewise preperiodic points.
\end{proof}

For a rational prime $p$, put
$\Phi_p([x_0:x_1])=[x_0^p:x_1^p]$ and
\begin{equation}\label{eq:section}
 \mathscr S_p(\mathbf X,\mathbf Y)
 =(Y_0X_1^p-Y_1X_0^p)(Y_0^pX_1^p-Y_1^pX_0^p).
\end{equation}
This form has bidegree $(2p,p+1)$ and
\begin{equation}\label{eq:section-norm}
 \norm{\mathscr S_p}_v(P,Q)
 =\delta_v(Q,\Phi_p(P))\,
  \delta_v(\Phi_p(Q),\Phi_p(P)).
\end{equation}
In residue characteristic $p$,
$$
 \widetilde{\mathscr S_p}
 =(Y_0X_1^p-Y_1X_0^p)(Y_0X_1-Y_1X_0)^p\neq0.
$$
The zero locus of its reduction is the union of the Frobenius graph and the diagonal
$\Delta$. For diagonal pairs we also use
\begin{equation}\label{eq:Bp}
 B_p(X_0,X_1)=X_0X_1^p-X_1X_0^p,
 \qquad \norm{B_p}_v(P)=\delta_v(P,\Phi_p(P)),
\end{equation}
whose reduction is nonzero.

For the rest of this section, let $f\in\Qbar(z)$ be nonexceptional
of degree $d$. For $B\in\C(z)$ of degree at least two, set
\[
 \cent B=\{U\in\C(z):\deg U\geq1,\ U\circ B=B\circ U\},
 \qquad
 \centinf B=\bigcup_{n\geq1}\cent{\iter B n}.
\]

\begin{proposition}\label{prop:uniform}
There are $N\geq1$ and nonconstant maps
$F,A_1,\ldots,A_r\in\Qbar(z)$, with $\deg F=d^N$, such that
\begin{equation}\label{eq:uniform-semi}
 \iter f N\circ A_i=A_i\circ F\qquad(1\leq i\leq r),
\end{equation}
and every irreducible $(f,f)$-periodic curve $C\subset(\P^1)^2$
dominating both factors admits a surjective parametrization of one
of the forms
\begin{equation}\label{eq:families}
 (u,w)=(A_i\circ\iter F k,A_j)
 \quad\text{or}\quad
 (u,w)=(A_j,A_i\circ\iter F k),\qquad k\geq0.
\end{equation}
These maps satisfy
\begin{equation}\label{eq:param-semi}
 u\circ F=\iter f N\circ u,
 \qquad w\circ F=\iter f N\circ w.
\end{equation}
\end{proposition}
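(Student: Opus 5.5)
The plan is to reduce everything to the structure theory of invariant curves for pairs of rational maps \cite{Pak23,Pak20}, together with the uniformity statement of \cite{Bea25}, and then to normalize the resulting parametrizations by cancelling a common power of $F$.

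\emph{Step 1: parametrize each curve.} Let $C\subset(\P^1)^2$ be irreducible, $(f,f)$-periodic of some period $n$, and dominating both factors. Then $C$ is invariant under $(\iter f n,\iter f n)$. I will invoke Pakovich's description of invariant curves \cite{Pak23}. Since $f$ is nonexceptional, $C$ has genus zero; the genus-one possibility would force a Latt\`es-type structure. Moreover, $C$ is the image of a map $(X_1,X_2):\P^1\to C$ with
\[
X_1\circ B=\iter f n\circ X_1,\qquad X_2\circ B=\iter f n\circ X_2,
\]
for some rational map $B$ of degree $d^n$. So each periodic curve comes with a pair of semiconjugacies from a common map $B$ to an iterate of $f$.

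\emph{Step 2: uniformity (the main obstacle).} The issue is that $n$, $B$ and the $X_i$ a priori depend on $C$. The key input is the finiteness theorem of \cite{Pak20,Bea25} for nonspecial maps. It says: there exist $N\geq1$, a map $F$ of degree $d^N$, and finitely many maps $A_1,\ldots,A_r$ satisfying \eqref{eq:uniform-semi}, such that every semiconjugacy $X$ with $X\circ B=\iter f{m}\circ X$ can be written, after passing to a common iterate and reparametrizing the source by a M\"obius map, as $X=A_i\circ\iter F k$ for some $i$ and some $k\geq0$.

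The underlying mechanism is the following. For nonexceptional $f$, the centralizer $\centinf f$ is, up to iterates of $f$, a finite set. Also, semiconjugacies to iterates of $f$ factor through finitely many ``primitive'' ones, namely the minimal fiber-product solutions. This is exactly what prevents the degrees of the $A_i$ from growing. I would take $N$ to be a common multiple of the finitely many periods that arise, so that the same $F$ works for all the $A_i$ at once. Replacing $n$ by a multiple of $N$ is harmless, since $C$ is invariant under every multiple of its period. This step is where I would rely most heavily on the cited classification, and checking that its hypotheses match ``nonexceptional'' is the delicate point.

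\emph{Step 3: normalization and the relation \eqref{eq:param-semi}.} Step 2 writes $C$ as the image of $(A_i\circ\iter F k,A_j\circ\iter F l)$. Suppose $k\geq l$. Since $\iter F l$ is surjective on $\P^1$, the curve $C$ is also the image of $(A_i\circ\iter F{k-l},A_j)$; the case $l\geq k$ is symmetric. This gives exactly the two families in \eqref{eq:families}.

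Finally, \eqref{eq:param-semi} is formal from \eqref{eq:uniform-semi}. Indeed,
\[
A_i\circ\iter Fk\circ F=A_i\circ F\circ\iter Fk=\iter fN\circ A_i\circ\iter Fk,
\]
and $A_j\circ F=\iter fN\circ A_j$ is \eqref{eq:uniform-semi} itself. Surjectivity of the parametrization holds because any nonconstant map from $\P^1$ onto an irreducible curve is surjective.
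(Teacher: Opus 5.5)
Your Steps~1 and~3 are sound, but Step~2 carries the whole content of the proposition. The ``finiteness theorem'' you attribute to \cite{Pak20,Bea25} is not what those papers prove, and as you state it, it is false. It concerns a single semiconjugacy $X$ from an arbitrary source $B$. For the nonexceptional polynomial $f(z)=(z^3+1)^2$, both $X(z)=z$ (source $f$) and $X(z)=z^2$ (source $B=z^6+1$, since $f(z^2)=(z^6+1)^2$) qualify. Since $\deg F=6^N>2$, degrees force $k=0$ in both cases. Then $A_i\circ F=\iter fN\circ A_i$ makes $F$ conjugate both to $\iter fN$ and to $\pm\iter BN$. That is impossible: $\iter fN$ has local degree $3$ at $0$, while every local degree of $\pm\iter BN$ lies in $\{1,6,6^N\}$.

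There are two further problems with Step~2. A correct single-map statement, applied to $X_1$ and $X_2$ separately, would give two unrelated M\"obius reparametrizations. Cancelling $\iter F{\min(k,l)}$ in Step~3, however, requires both coordinates to be written in one source variable. Also, taking $N$ to be a common multiple of ``the finitely many periods that arise'' presupposes the uniformity being proved, since there are infinitely many periodic curves.

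The missing idea is to fix one semiconjugacy for all curves and both coordinates, and to work with the centralizer of its source rather than that of $f$. Pakovich's covering construction \cite[Theorems~4.14--4.15, Remark~4.16]{Pak23} gives $\theta,B$ with $f\circ\theta=\theta\circ B$ and $B$ not a generalized Latt\`es map. That is the hypothesis under which \cite[Theorem~4.10]{Pak23} describes invariant curves. By \cite[Lemma~4.4]{Pak23}, every $(f,f)$-periodic curve lifts to a $(B,B)$-periodic curve. It is therefore the image of $(\theta\circ U,\theta\circ V)$ with $U,V\in\centinf B$ in a common variable.

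Uniformity then comes from two results. Beaumont's stabilization $\centinf B=\cent{\iter BN}$ \cite[Corollary~1.4]{Bea25} fixes $N$ and $F=\iter BN$ once and for all. Pakovich's description $\cent F=\{H_i\circ\iter Fa\}$ \cite[Theorem~1.2]{Pak20} gives finitely many generators modulo iterates. Setting $A_i=\theta\circ H_i$, your Step~3 applies verbatim.

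You also do not address why $F$ and the $A_i$ lie in $\Qbar(z)$. The paper takes this from \cite[Section~5.2]{Pak23} for $\theta,B$, and from the finiteness of $\cent F$ in each degree for the $H_i$.
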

\begin{proof}
The covering construction in~\cite[Theorems~4.14--4.15 and
Remark~4.16]{Pak23} gives fixed maps $\theta,B\in\Qbar(z)$ with
$\theta$ nonconstant and $B$ not a generalized Latt\`es map, such that
\begin{equation}\label{eq:cover}
 f\circ\theta=\theta\circ B.
\end{equation}
The algebraicity of this construction is proved in
\cite[Section~5.2, proof of Theorem~1.3]{Pak23}.
Use the same cover in both coordinates. By~\cite[Lemma~4.4]{Pak23},
each curve $C$ lifts to a periodic $(B,B)$-curve. Theorem~4.10
there gives a parametrization of $C$ by
\begin{equation}\label{eq:pak-input}
 t\longmapsto(\theta(U(t)),\theta(V(t))),
 \qquad U,V\in\centinf B.
\end{equation}
In particular, $B$ is nonexceptional and $\deg B=d$.

Beaumont's stabilization theorem~\cite[Corollary~1.4]{Bea25} gives
$N\geq1$ with
\begin{equation}\label{eq:stabilization}
 \centinf B=\cent{\iter B N}.
\end{equation}
Put $F=\iter B N$. It is nonexceptional, so
\cite[Theorem~1.2]{Pak20} gives $H_1,\ldots,H_r\in\cent F$ with
\begin{equation}\label{eq:centralizer-family}
 \cent F=\{H_i\circ\iter F a:1\leq i\leq r,\ a\geq0\}.
\end{equation}
For each degree $e$, this set contains only finitely many maps of
degree $e$. The commuting equations define a locus over $\Qbar$
in the parameter space $\Rat_e$, so its finite set of solutions is
algebraic. Thus $H_i\in\Qbar(z)$; cf.~\cite[Section~3.4, proof of
Proposition~3.6(i), Step~1]{Bea25}.

Set $A_i=\theta\circ H_i$. Equation~\eqref{eq:uniform-semi}
follows from~\eqref{eq:cover} and $H_i\circ F=F\circ H_i$.
By~\eqref{eq:stabilization}--\eqref{eq:centralizer-family}, every
parametrization~\eqref{eq:pak-input} has the form
$(A_i\circ\iter F a,A_j\circ\iter F b)$. Removing the common right
factor $\iter F{\min(a,b)}$, which is surjective, preserves its
image and gives~\eqref{eq:families}. Equation~\eqref{eq:param-semi}
follows from~\eqref{eq:uniform-semi}.
\end{proof}

Enlarge $K$ to contain the coefficients of $f,F,A_1,\ldots,A_r$,
and put $e_i=\deg A_i$ and $D=\deg F=d^N$.
\begin{proposition}\label{prop:prime}
There is a rational prime $p>3$ splitting completely in $K$, and a
place $v\mid p$, such that:
\begin{enumerate}
\item $f,F,A_1,\ldots,A_r$ have good reduction at $v$;
\item $p\nmid D\prod_i e_i$;
\item for all $i,j$ and $k\geq0$,
\begin{equation}\label{eq:separate-family}
 A_i\circ\iter F k\neq A_j
 \quad\Longrightarrow\quad
 \widetilde{A_i\circ\iter F k}\neq\widetilde{A_j}.
\end{equation}
\end{enumerate}
Every pair $(u,w)$ in~\eqref{eq:families} then has good reduction,
coordinate degrees prime to $p$, and
\begin{equation}\label{eq:separate-frob}
 \widetilde w\neq\widetilde{\Phi_p\circ u}.
\end{equation}
If $u\ne w$, then $\widetilde u\neq\widetilde w$.
\end{proposition}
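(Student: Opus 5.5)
The plan is to pick $p$ by Chebotarev's theorem after excluding finitely many primes. The one real issue is that condition (3) ranges over infinitely many $k$, so the first step is to reduce it to finitely many pairs using degrees.

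\emph{Reducing (3) to finitely many pairs.} For a map with good reduction at $v$, the reduction has the same degree (as noted after Definition~\ref{def:good_red}). Compositions of maps with good reduction again have good reduction, and reduction commutes with composition; this holds because the resultant of a composition is a product of powers of the resultants, and~\eqref{eq:good-norm} is preserved. Hence, once (1) holds, $\widetilde{A_i\circ\iter F k}$ has degree $e_iD^k$ and $\widetilde{A_j}$ has degree $e_j$. If $e_iD^k\neq e_j$, the implication~\eqref{eq:separate-family} is automatic. Since $D\geq2$, for each pair $(i,j)$ there is at most one $k$ with $e_iD^k=e_j$. So only a finite set $\mathcal P$ of pairs of distinct maps $(A_i\circ\iter F k,A_j)$ needs attention. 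For each such pair, fix normalized lifts $[U_0:U_1]$ and $[W_0:W_1]$ over $K$. The form $U_0W_1-U_1W_0$ is nonzero and has coefficients in $K$, so one of its nonzero coefficients is a $v$-unit for all but finitely many $v$. For those $v$ the reductions of the two maps differ.

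\emph{Choosing $p$.} Now exclude the following finite sets of places:
\begin{itemize}
\item the places where some lift of $f,F,A_1,\ldots,A_r$ fails to be integral or to have unit resultant;
\item the finitely many bad places coming from $\mathcal P$;
\item the places above primes dividing $6D\prod_ie_i$.
\end{itemize}
By Chebotarev's theorem (applied to the Galois closure of $K$), infinitely many rational primes split completely in $K$. Choose such a $p$ lying below none of the excluded places, and take any $v\mid p$. Then (1)--(3) hold, and $p>3$.

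\emph{Deriving the consequences.} Each pair $(u,w)$ in~\eqref{eq:families} consists of compositions of $A_i$ and $F$, so both maps have good reduction. Their degrees are $e_iD^k$ and $e_j$, which are prime to $p$ by (2).

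For~\eqref{eq:separate-frob}, observe that $\Phi_p$ has good reduction. Therefore $\widetilde{\Phi_p\circ u}=\widetilde{\Phi_p}\circ\widetilde u$ has degree $p\deg u$, which is divisible by $p$. On the other hand, $\deg\widetilde w=\deg w$ is prime to $p$, so the two reductions differ.

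Finally, suppose $u\neq w$. Up to the symmetry of the two forms in~\eqref{eq:families}, we have $u=A_i\circ\iter F k$ and $w=A_j$, and then (3) gives $\widetilde u\neq\widetilde w$.

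The main obstacle is the infinite range of $k$ in (3), and the degree bookkeeping in the first step handles it.
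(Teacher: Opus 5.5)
Your proof is correct and follows the paper's argument essentially step for step. You reduce condition (3) to the finitely many triples with $e_iD^k=e_j$, using that good reduction preserves degrees, and exclude the finitely many places where the nonzero cross-product of the lifts reduces to zero. You then choose $p$ by Chebotarev and obtain \eqref{eq:separate-frob} by comparing the degree $p\deg u$ with a degree prime to $p$.
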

\begin{proof}
The first two conditions exclude finitely many places. Elsewhere,
reduction preserves degrees, so~\eqref{eq:separate-family} can fail
only when $e_iD^k=e_j$. For each $(i,j)$ there is at most one such
$k$. For every such triple with distinct maps, exclude the finitely
many places where the nonzero cross-product of their homogeneous
lifts reduces to zero. Chebotarev's theorem
\cite[Theorem~8.31 and Corollary~8.32]{Mne20} supplies a rational
prime splitting completely in the normal closure of $K/\Q$ and
avoiding these places and the primes at most three.

Composition preserves good reduction. The coordinate degrees in
\eqref{eq:families} are prime to $p$, whereas
$\deg(\Phi_p\circ u)=p\deg u$. This proves~\eqref{eq:separate-frob}.
The last assertion follows from~\eqref{eq:separate-family}.
\end{proof}

\begin{proposition}\label{cor:curve}
Fix $K,p,v$ as above. Let $C\neq\Delta$ be an irreducible
$(f,f)$-periodic curve dominating both factors, and fix $0<\rho<1$.
There is no sequence of distinct $(P_n,Q_n)\in C(\Kbar)$ such that
$$
 \hh f(P_n)+\hh f(Q_n)\longrightarrow0,
 \qquad
 \norm{\mathscr S_p}_v(\gamma P_n,\gamma Q_n)\leq\rho
 \quad(\gamma\in G_K).
$$
In particular, only finitely many coordinatewise preperiodic pairs
on $C$ satisfy this norm bound on all conjugates.
\end{proposition}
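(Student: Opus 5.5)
The plan is to parametrize $C$ by $\P^1$ via Proposition~\ref{prop:uniform}, pull the sequence back to the parameter line, and run Lemma~\ref{lem:obstruction} with $m=1$ and $R=F$. By Proposition~\ref{prop:uniform}, $C$ is the image of $t\mapsto(u(t),w(t))$ with $(u,w)$ one of the pairs in \eqref{eq:families}. Since $C\neq\Delta$, we have $u\neq w$. Choose $t_n\in\P^1(\Kbar)$ with $(u(t_n),w(t_n))=(P_n,Q_n)$; the $t_n$ are distinct because the $(P_n,Q_n)$ are. By Lemma~\ref{lem:semiheight} applied to $u$ (and $F$, using \eqref{eq:param-semi}), $\hh F(t_n)=\hh f(P_n)/\deg u\to0$.

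Next I transfer the norm bound to the parameter. Define the binary form $H(\mathbf T)=\mathscr S_p(U(\mathbf T),W(\mathbf T))$, where $U,W$ are good-reduction lifts of $u,w$ (Proposition~\ref{prop:prime}). By \eqref{eq:good-norm}, the normalizing denominators match, so
\[
 \norm H_v(t)=\norm{\mathscr S_p}_v(u(t),w(t))
\]
for every $t$. The form $H$ is integral. Every $\gamma\in G_K$ fixes the coefficients of $u,w$ (which lie in $K$), so $\gamma(u(t_n),w(t_n))=(u(\gamma t_n),w(\gamma t_n))$. Hence $\norm H_v(\gamma t_n)\le\rho$ for all $\gamma$. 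Lemma~\ref{lem:obstruction} with $m=1$ then gives a contradiction, provided $\widetilde H\neq0$. The final assertion follows in the same way: preperiodic pairs on $C$ lift to $F$-preperiodic parameters by Lemma~\ref{lem:semiheight}, and the finiteness clause of Lemma~\ref{lem:obstruction} applies (finitely many $t$ give finitely many pairs).

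The main step is to show $\widetilde H\neq0$. Reduction commutes with composition for good-reduction maps, so
\[
 \widetilde H=\widetilde{\mathscr S_p}(\widetilde U,\widetilde W)=(\widetilde W_0\widetilde U_1^p-\widetilde W_1\widetilde U_0^p)\,(\widetilde W_0\widetilde U_1-\widetilde W_1\widetilde U_0)^p .
\]
The first factor vanishes identically only if $\widetilde w=\Phi_p\circ\widetilde u$ as rational maps. One must be slightly careful about the case where $\widetilde U_0,\widetilde U_1$ have a common factor after raising to the $p$-th power; since $\widetilde u$ has degree $\deg u$ (good reduction), $\widetilde{\Phi_p\circ u}=\Phi_p\circ\widetilde u$ has degree $p\deg u$, and this case is excluded by \eqref{eq:separate-frob}. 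The second factor vanishes only if $\widetilde u=\widetilde w$, which is excluded by the last assertion of Proposition~\ref{prop:prime} since $u\neq w$. So $\widetilde H\neq0$, completing the argument. The only delicate points I anticipate are the bookkeeping of degrees in \eqref{eq:good-norm}, namely that the bidegree $(2p,p+1)$ pulls back correctly to the single degree $2p\deg u+(p+1)\deg w$, and checking that $u,w$ have coefficients in $K$ so that Galois equivariance holds. Both are ensured by the enlargement of $K$ made before Proposition~\ref{prop:prime}.
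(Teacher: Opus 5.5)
Your proposal is correct and follows the paper's proof essentially step for step. You pull $\mathscr S_p$ back along the parametrization $(u,w)$ of Proposition~\ref{prop:uniform} to a binary form, verify that its reduction is nonzero via \eqref{eq:separate-frob} and the last assertion of Proposition~\ref{prop:prime}, transfer heights with Lemma~\ref{lem:semiheight} and the norm bound with \eqref{eq:good-norm}, and conclude with Lemma~\ref{lem:obstruction} for $R=F$ on $\P^1$.
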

\begin{proof}
Take $(u,w)$ as in Proposition~\ref{prop:uniform}, with good
homogeneous lifts $\mathbf U=(U_0,U_1)$ and $\mathbf W=(W_0,W_1)$.
The form $H_C=\mathscr S_p(\mathbf U,\mathbf W)$ is integral,
homogeneous of degree $2p\deg u+(p+1)\deg w$, and
\begin{equation}\label{eq:curve-pullback-red}
 \widetilde H_C
 =(\widetilde W_0\widetilde U_1^{\,p}
       -\widetilde W_1\widetilde U_0^{\,p})
  (\widetilde W_0\widetilde U_1
       -\widetilde W_1\widetilde U_0)^p\neq0.
\end{equation}
The first factor is nonzero by~\eqref{eq:separate-frob}; the second
is nonzero by Proposition~\ref{prop:prime}, since $u\ne w$.

Lift $(P_n,Q_n)$ to $t_n\in\P^1(\Kbar)$. The $t_n$ are distinct,
and Lemma~\ref{lem:semiheight} gives
\[
 \hh F(t_n)=\frac{\hh f(P_n)}{\deg u}\longrightarrow0.
\]
By~\eqref{eq:good-norm} and~\eqref{eq:model},
$\norm{H_C}_v(t)=\norm{\mathscr S_p}_v(u(t),w(t))$.
Since $u,w$ are defined over $K$, the assumed bound gives
$\norm{H_C}_v(\gamma t_n)\leq\rho$ for every $\gamma\in G_K$.
This contradicts Lemma~\ref{lem:obstruction} for $R=F$ on $\P^1$.
\end{proof}

\section{Local abelian congruences and collisions}\label{sec:local}

Fix $K,p,v$ as in Proposition~\ref{prop:prime}, normalize
$\abs p_v=p^{-1}$, and fix $\Kbar\hookrightarrow\C_p$.
For a finite abelian extension $L/K$, let $w$ be the induced place,
let $D_w\subseteq\Gal(L/K)$ be its decomposition group, and set
$e=e(w/v)$. Since $K_v=\Q_p$, the extension $L_w/\Q_p$ is abelian.
We identify $D_w$ with $\Gal(L_w/K_v)$.

\begin{lemma}\label{lem:az}
Let $P\in\P^1(\Kab)$ and $L=K(P)$.
\begin{enumerate}
\item There is $\sigma_P\in D_w$ satisfying one of the following:
\begin{enumerate}
\item[\textup{(U)}] $L_w/K_v$ is unramified and
\[
 \delta_v(\sigma_Px,\Phi_p(x))\leq p^{-1}
 \qquad(x\in\P^1(L));
\]
\item[\textup{(R)}] $L_w/K_v$ is ramified, $\sigma_P\neq\id$, and
\[
 \delta_v(\Phi_p(\sigma_Px),\Phi_p(x))\leq p^{-1}
 \qquad(x\in\P^1(L)).
\]
\end{enumerate}
\item Put $Q_P=\sigma_PP$. For all $n\geq0$ and $\gamma\in G_K$,
\begin{equation}\label{eq:persistent}
 \norm{\mathscr S_p}_v
 (\gamma\iter f n(P),\gamma\iter f n(Q_P))\leq p^{-1}.
\end{equation}
\item If $Q_P=P$, then \textup{(U)} holds and
\begin{equation}\label{eq:diagonalU}
 \norm{B_p}_v(\gamma P)\leq p^{-1}
 \qquad(\gamma\in G_K).
\end{equation}
\end{enumerate}
\end{lemma}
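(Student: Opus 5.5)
The plan is to prove the three parts in order: part~(1) is a local computation in the spirit of Amoroso--Zannier, and parts~(2)--(3) follow from it formally because $\Gal(L/K)$ is abelian.

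\emph{Part (1).} Since $K_v=\Q_p$ and $L_w/\Q_p$ is abelian, local class field theory (local Kronecker--Weber) gives $L_w\subseteq M(\zeta_{p^m})$, with $M/\Q_p$ finite unramified and $m\geq0$. Choose $m$ minimal.

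\emph{Unramified case.} Then $m=0$ and $e=1$. Take $\sigma_P$ to be the Frobenius of $L_w/\Q_p$, so $\sigma_P y\equiv y^p$ modulo $p\mathcal O_{L_w}$, i.e.\ $\abs{\sigma_Py-y^p}_v\leq p^{-1}$ for every integral $y$. For $x\in\P^1(L)$, choose coordinates $x=[x_0:x_1]$ with $\max\abs{x_i}_v=1$. Then
\[
\delta_v(\sigma_Px,\Phi_p(x))\leq\abs{\sigma_P(x_0)x_1^p-\sigma_P(x_1)x_0^p}_v\leq p^{-1},
\]
since $\sigma_P(x_0)x_1^p-\sigma_P(x_1)x_0^p\equiv x_0^px_1^p-x_1^px_0^p=0$ modulo $p$.

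\emph{Ramified case.} Now $m\geq1$. Take $\tau$ to be a generator of $\Gal(M(\zeta_{p^m})/M(\zeta_{p^{m-1}}))$, and let $\sigma_P$ be its restriction to $L_w$. By minimality of $m$, $L_w\not\subseteq M(\zeta_{p^{m-1}})$, so $\sigma_P\neq\id$. It lies in $D_w$ under the identification with $\Gal(L_w/K_v)$.

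Since $\mathcal O_{L_w}\subseteq\mathcal O_M[\zeta_{p^m}]$ and $\tau$ fixes $\mathcal O_M$, we have $\tau(\zeta_{p^m})=\zeta_{p^m}\eta$ with $\eta$ a $p$-th root of unity (or $\tau\zeta_p=\zeta_p^a$ when $m=1$). Hence $\tau y-y\in(\zeta_p-1)$ for every $y\in\mathcal O_M[\zeta_{p^m}]$. Writing $\sigma_Py=y+b$ with $\abs b_v\leq p^{-1/(p-1)}$, we get
\[
(\sigma_Py)^p\equiv y^p+b^p \pmod p,\qquad \abs{b^p}_v\leq p^{-p/(p-1)}<p^{-1}.
\]
So $\abs{(\sigma_Py)^p-y^p}_v\leq p^{-1}$. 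The same normalization of coordinates as above then yields
\[
\delta_v(\Phi_p(\sigma_Px),\Phi_p(x))\leq p^{-1}.
\]
This step, choosing $\sigma_P$ nontrivial yet congruent to the identity modulo $\zeta_p-1$, is the main point. The minimal-$m$ trick should handle both the tame and the wild case uniformly.

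\emph{Part (2).} The key input is that $L/K$ is abelian. For $\gamma\in G_K$, the point $x=\gamma\iter fn(P)$ lies in $\P^1(L)$, because $f$ is defined over $K$ and $L/K$ is Galois. Moreover $\gamma$ restricted to $L$ commutes with $\sigma_P$, so
\[
\gamma\iter fn(Q_P)=\gamma\sigma_P\iter fn(P)=\sigma_P x.
\]
Each chordal distance is at most $1$. So by~\eqref{eq:section-norm}, $\norm{\mathscr S_p}_v(x,\sigma_Px)$ is bounded by either factor. In case~(U) use the factor $\delta_v(\sigma_Px,\Phi_p(x))$; in case~(R) use the factor $\delta_v(\Phi_p(\sigma_Px),\Phi_p(x))$. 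Part~(1) bounds each by $p^{-1}$, which gives~\eqref{eq:persistent}.

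\emph{Part (3).} If $\sigma_PP=P$, then $\sigma_P$ fixes $L=K(P)$ pointwise, so $\sigma_P=\id$ in $\Gal(L/K)$. This excludes case~(R), so case~(U) holds. Then, using commutativity again,
\[
\norm{B_p}_v(\gamma P)=\delta_v(\gamma P,\Phi_p(\gamma P))=\delta_v(\sigma_P\gamma P,\Phi_p(\gamma P))\leq p^{-1},
\]
because $\sigma_P\gamma P=\gamma\sigma_PP=\gamma P$. This is~\eqref{eq:diagonalU}.
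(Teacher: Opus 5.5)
Your argument is correct. Parts (2) and (3) coincide with the paper's: commutativity of $\Gal(L/K)$ gives $\gamma\iter f n(Q_P)=\sigma_P\gamma\iter f n(P)$, one chordal factor of $\norm{\mathscr S_p}_v$ is bounded by $p^{-1}$ and the other by $1$, and a nonidentity $\sigma_P$ cannot fix the generator $P$ of $L/K$.

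The difference is in part (1). The paper does not prove the local congruences. It quotes Amoroso--Zannier's Lemmas 3.1--3.2: a Frobenius-type $\phi$ with $\abs{\phi(a)-a^p}_v\le p^{-1/e}$, and a subgroup $H\subseteq D_w$ of order at least $\min\{e,p\}$ with $\abs{(\tau a)^p-a^p}_v\le p^{-1}$. These are stated for algebraic integers of $L$ and extended to $\mathcal O_{L_w}$ by density, and the paper then takes $\sigma_P=\phi$ or $\sigma_P\in H\setminus\{\id\}$.

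You instead derive what is needed directly from local Kronecker--Weber, working in $L_w$ throughout. In the unramified case you take the Frobenius of $L_w/\Q_p$. Otherwise you take the restriction of a generator of the top layer $\Gal(M(\zeta_{p^m})/M(\zeta_{p^{m-1}}))$ with $m$ minimal. This restriction is nontrivial on $L_w$ by minimality, and it moves every element of $\mathcal O_M[\zeta_{p^m}]$ by a multiple of $\zeta_p-1$.

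What your route buys:
\begin{itemize}
\item the lemma becomes self-contained and $\sigma_P$ is explicit;
\item the density step disappears;
\item it even gives the sharper bound $\abs{(\sigma_P y)^p-y^p}_v\le p^{-p/(p-1)}$.
\end{itemize}
The cost is that it uses $K_v=\Q_p$ essentially, since it relies on abelian extensions of $\Q_p$ being cyclotomic. That is exactly what the choice of a completely split $p$ provides, so nothing is lost here.

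One detail you should state explicitly. In both chordal computations the denominators equal $1$ because $\sigma_P\in D_w$ acts isometrically on $L_w$, so $\norm{\sigma_P\mathbf x}_v=\norm{\mathbf x}_v=1$. Without this the inequality you write would not follow; with it, that inequality is in fact an equality.
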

\begin{proof}
Amoroso--Zannier~\cite[Lemmas~3.1--3.2]{AZ00} give $\phi\in D_w$
and a subgroup $H\subseteq D_w$ of order at least $\min\{e,p\}$ such that
\[
 \abs{\phi(a)-a^p}_v\leq p^{-1/e},\qquad
 \abs{(\tau a)^p-a^p}_v\leq p^{-1}
 \quad(\tau\in H,\ a\in\mathcal O_{L_w}).
\]
Their congruences extend from algebraic integers in $L$ to
$\mathcal O_{L_w}$ by density. If $e=1$, take $\sigma_P=\phi$;
otherwise take $\sigma_P\in H\setminus\{\id\}$. To obtain the chordal
inequalities, use the affine coordinate $z$ when $\abs z_v\leq1$
and $z^{-1}$ otherwise. Inversion preserves chordal distance and
commutes with $\Phi_p$ and the automorphisms in $D_w$.

The extension $L/K$ is abelian, so $\gamma P\in\P^1(L)$ and
\[
 \sigma_P\iter f n(\gamma P)=\gamma\iter f n(Q_P).
\]
Apply (1) to $\iter f n(\gamma P)$ and use
\eqref{eq:section-norm}. This proves (2), since both chordal factors
are at most one. In case \textup{(R)}, the nonidentity automorphism
$\sigma_P$ cannot fix the generator $P$ of $L/K$. Thus $Q_P=P$
forces \textup{(U)}, and (3) follows from~\eqref{eq:Bp}.
\end{proof}

\begin{remark}
Equation~\eqref{eq:persistent} uses a congruence on all of $L$,
not a commutation relation between $f$ and $\Phi_p$.
The automorphism $\gamma$ need not preserve $w$: the congruence is
applied to $\iter f n(\gamma P)\in\P^1(L)$ at the fixed place.
\end{remark}

\begin{lemma}\label{lem:close}
If $P,Q\in\P^1(\C_p)$ satisfy
$\delta_v(\Phi_p(P),\Phi_p(Q))\leq p^{-1}$, then
$$
 \delta_v(P,Q)\leq p^{-1/p}<1.
$$
\end{lemma}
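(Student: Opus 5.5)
Since the chordal distance and $\Phi_p$ both commute with the inversion $z\mapsto z^{-1}$, we may normalize coordinates. Choose $\mathbf x=(x_0,x_1)$ for $P$ and $\mathbf y=(y_0,y_1)$ for $Q$ with $\norm{\mathbf x}_v=\norm{\mathbf y}_v=1$. The hypothesis then reads
\[
 \abs{x_0^py_1^p-x_1^py_0^p}_v\leq p^{-1},
\]
because $\norm{(x_0^p,x_1^p)}_v=\norm{\mathbf x}_v^p=1$, and similarly for $\mathbf y$. The goal is to bound $a=x_0y_1-x_1y_0$, since $\delta_v(P,Q)=\abs a_v$.

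The key step is to write $a=c-b$ with $c=x_0y_1$ and $b=x_1y_0$, both in the closed unit ball of $\C_p$, and to compare $c^p-b^p$ with $(c-b)^p$. The binomial expansion gives
\[
 c^p-b^p=(b+a)^p-b^p=a^p+\sum_{k=1}^{p-1}\binom pk b^{p-k}a^k .
\]
Each middle coefficient is divisible by $p$, so the sum has absolute value at most $p^{-1}\abs a_v\leq p^{-1}$. Since $\abs{c^p-b^p}_v\leq p^{-1}$ by hypothesis, the ultrametric inequality yields $\abs{a^p}_v\leq p^{-1}$. Hence $\abs a_v\leq p^{-1/p}$, which is strictly less than $1$.

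No step here is a real obstacle. The only points that need attention are the normalization, which ensures that both representatives and their $p$-th powers have norm one so that the chordal distances equal the displayed absolute values, and the observation that the middle binomial terms carry the factor $p$ together with coefficients that are integral.
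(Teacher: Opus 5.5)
Your proof is correct and is essentially the paper's argument: both rest on the binomial expansion with $p$ dividing the intermediate coefficients, combined with the ultrametric inequality. The only difference is in the setup. The paper first uses injectivity of Frobenius on the residue field to put $P$ and $Q$ in a common integral affine chart and then estimates $\abs{x-y}_v$. You apply the expansion directly to the cross-terms $x_0y_1$ and $x_1y_0$ of norm-one homogeneous representatives, which makes that preliminary step, and your opening remark about inversion, unnecessary.
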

\begin{proof}
Frobenius is injective on the residue field, so $P$ and $Q$ have
the same reduction. In a common integral affine chart, write them
as $x,y$ with $\abs x_v,\abs y_v\leq1$. Since $p$ divides every
intermediate binomial coefficient,
\[
 \abs{x-y}_v^p\leq\max\{\abs{x^p-y^p}_v,p^{-1}\}\leq p^{-1}.
\]
In this chart $\delta_v(P,Q)=\abs{x-y}_v$.
\end{proof}

\begin{lemma}\label{lem:collision}
Let $g\in K_v(z)$ have good reduction and separable reduction.
There is an integral homogeneous form $J_g$ of degree
$2\deg g-2$, with $\widetilde J_g\neq0$, such that for all $P,Q\in\P^1(\C_v)$ and $0<r<1$,
\begin{equation}\label{eq:collision}
 P\neq Q,\quad g(P)=g(Q),\quad\delta_v(P,Q)\leq r
 \quad\Longrightarrow\quad\norm{J_g}_v(P)\leq r.
\end{equation}
\end{lemma}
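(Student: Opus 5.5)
Write $g=[G_0:G_1]$ with $G_0,G_1$ integral of degree $e=\deg g$ and unit resultant. Our candidate for $J_g$ is the Wronskian (ramification) form
\[
 J_g(X_0,X_1)=\frac{\partial G_0}{\partial X_0}\frac{\partial G_1}{\partial X_1}-\frac{\partial G_0}{\partial X_1}\frac{\partial G_1}{\partial X_0}\Big/ e ,
\]
or more safely the form obtained from the divided difference. Concretely, consider the bihomogeneous form
\[
 \Delta_g(\mathbf x,\mathbf y)=\frac{G_0(\mathbf x)G_1(\mathbf y)-G_1(\mathbf x)G_0(\mathbf y)}{x_0y_1-x_1y_0},
\]
which is a polynomial with integral coefficients of bidegree $(e-1,e-1)$, since the numerator vanishes on the diagonal. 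Define $J_g(\mathbf x)=\Delta_g(\mathbf x,\mathbf x)$, which has degree $2e-2$. This is the ramification form: its zeros are the critical points of $g$. It is integral, and its reduction is the ramification form of $\widetilde g$. That reduction is nonzero precisely because $\widetilde g$ is separable: an inseparable reduction would have identically vanishing Wronskian. This is where the separability hypothesis enters. We avoid dividing by $e$, so no integrality issue arises.

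For the inequality, take $P\neq Q$ with $g(P)=g(Q)$ and $\delta_v(P,Q)\le r<1$. Since $r<1$, $P$ and $Q$ have the same reduction, so after an inversion of the coordinate (which preserves chordal distances and transforms $J_g$ compatibly) we may choose lifts $\mathbf x=(x,1)$ and $\mathbf y=(y,1)$ with $\abs{x}_v,\abs{y}_v\le1$. Then $\delta_v(P,Q)=\abs{x-y}_v$ and both lifts have norm one. The condition $g(P)=g(Q)$ says the numerator of $\Delta_g$ vanishes at $(\mathbf x,\mathbf y)$, and since $x\ne y$ this gives $\Delta_g(\mathbf x,\mathbf y)=0$. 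Expand the integral polynomial $\Delta_g(\mathbf x,\mathbf y)$ in $y-x$:
\[
 \Delta_g(\mathbf x,\mathbf y)=J_g(\mathbf x)+(y-x)\,E(x,y),
\]
where $E$ has integral coefficients, so $\abs{E}_v\le1$. Hence $\abs{J_g(\mathbf x)}_v\le\abs{y-x}_v\le r$. Since $\norm{\mathbf x}_v=1$, we get $\norm{J_g}_v(P)=\abs{J_g(\mathbf x)}_v\le r$.

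The step needing the most care is checking that $\widetilde J_g\neq0$, i.e.\ that the reduction of $\Delta_g(\mathbf x,\mathbf x)$ is the ramification divisor of $\widetilde g$ and not zero. Since $\widetilde g$ has degree $e$ by good reduction, it suffices to observe that $\widetilde\Delta_g(\mathbf x,\mathbf x)$ is, up to sign, the Wronskian of $\widetilde G_0,\widetilde G_1$ divided through by the Euler factor. Rather than rely on that identity, we argue in the affine chart: there $\widetilde\Delta_g(x,x)$ equals $\widetilde G_0'(x)\widetilde G_1(x)-\widetilde G_1'(x)\widetilde G_0(x)$, which is the numerator of $\widetilde g'$. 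This is nonzero for a separable nonconstant reduction. The inversion chart is handled symmetrically.
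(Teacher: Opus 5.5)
Your proposal is correct and follows the paper's proof: you use the same divided-difference form $D_g(\mathbf X,\mathbf Y)$ evaluated on the diagonal, get nonvanishing of its reduction from $a'b-ab'$ and separability of $\widetilde g$, and bound $\abs{D_g(\mathbf x,\mathbf x)-D_g(\mathbf x,\mathbf y)}_v\le\abs{x-y}_v$ for norm-one lifts in a common integral chart. The opening Wronskian-divided-by-$e$ candidate is unnecessary (and its display is mis-parenthesized), but you rightly discard it in favour of the divided difference.
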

\begin{proof}
Choose a good homogeneous lift $g=[A:B]$ of degree $e$ and set
$$
 D_g(\mathbf X,\mathbf Y)
 =\frac{A(\mathbf X)B(\mathbf Y)-B(\mathbf X)A(\mathbf Y)}
 {X_0Y_1-X_1Y_0}.
$$
The quotient is integral and bihomogeneous of bidegree $(e-1,e-1)$.
Put
$$
 J_g(\mathbf X)=D_g(\mathbf X,\mathbf X).
$$
For $a(z)=A(z,1)$ and $b(z)=B(z,1)$, we have
$J_g(z,1)=a'(z)b(z)-a(z)b'(z)$. Its reduction is nonzero because
$\widetilde g$ is separable.

The points in~\eqref{eq:collision} have the same reduction. Choose
norm-one representatives $\mathbf x,\mathbf y$ in a common affine
chart, with the same coordinate equal to one. Then
$\norm{\mathbf x-\mathbf y}_v=\delta_v(P,Q)$ and
$D_g(\mathbf x,\mathbf y)=0$. Integral polynomials are
$1$-Lipschitz on the unit polydisc, whence
\[
 \norm{J_g}_v(P)
 =\abs{D_g(\mathbf x,\mathbf x)-D_g(\mathbf x,\mathbf y)}_v
 \leq\delta_v(P,Q)\leq r.
\]
\end{proof}

\begin{remark}\label{rem:all-iterates}
Every iterate $g=\iter f m$, $m\geq1$, has good reduction of
degree $d^m$ prime to $p$. Its reduction is therefore separable.
Thus Lemma~\ref{lem:collision} applies to an iterate chosen after
$p$ is fixed.
\end{remark}

\section{Small abelian points for nonexceptional maps}\label{sec:small}

\begin{proof}[Proof of Theorem~\ref{thm:small}]
Enlarge $K$ as required by Proposition~\ref{prop:uniform}; this is
harmless because $K^{\mathrm{ab}}\subseteq(K')^{\mathrm{ab}}$ for
every finite extension $K'/K$. Choose $p,v$ as in
Proposition~\ref{prop:prime}. If~\eqref{eq:small-finite} fails,
there are distinct $P_i\in\P^1(\Kab)$ with $\hh f(P_i)\longrightarrow0$.

Choose $\sigma_{P_i}$ as in Lemma~\ref{lem:az} and put
$Q_i=\sigma_{P_i}P_i$. If $P_i=Q_i$ infinitely often, then
\eqref{eq:diagonalU} contradicts Lemma~\ref{lem:obstruction} with
$R=f$ and $H=B_p$. Discard these indices. We now have
$P_i\neq Q_i$ and $\hh f(Q_i)=\hh f(P_i)$.

Write $\splitf=(f,f)$ and set
\[
 T=\bigcup_{n\geq0}\splitf^{\circ n}\{(P_i,Q_i):i\geq1\},
 \qquad H(P,Q)=\hh f(P)+\hh f(Q).
\]
Then $\splitf(T)\subseteq T$ and $H\circ\splitf=dH$.
Every point of $T$ satisfies~\eqref{eq:persistent} on all its
$K$-conjugates. Lemma~\ref{lem:obstruction}, applied to
$\mathscr S_p$, excludes a generic sequence in $T$ with $H\to0$.
Lemma~\ref{lem:core} therefore gives a proper closed set
$$
 Z\subsetneq(\P^1)^2,\qquad\splitf(Z)\subseteq Z,
$$
containing all but finitely many $(P_i,Q_i)$.

An irreducible curve component $C$ of $Z$ contains infinitely many
of these pairs; restrict to that subsequence. Since $P_i\neq Q_i$,
we have $C\neq\Delta$. The first coordinates are distinct. A fixed
second coordinate has only finitely many $K$-conjugates, so $C$
dominates both factors. By Remark~\ref{rem:finite-components}, $C$
is preperiodic. Choose $m\geq0$ such that
$D=\splitf^{\circ m}(C)$ is periodic, and put $g=\iter f m$.

If $D\neq\Delta$, the finite map $\splitf^{\circ m}$ gives
infinitely many distinct image pairs $(g(P_i),g(Q_i))$ on $D$.
Their heights tend to zero and their all-conjugate norm bound
persists by~\eqref{eq:persistent}, contradicting
Proposition~\ref{cor:curve}.

Suppose $D=\Delta$. Then $m\geq1$ and
\begin{equation}\label{eq:equalimages}
 P_i\neq Q_i,\qquad g(P_i)=g(Q_i).
\end{equation}
Restrict to a subsequence for which the same case of
Lemma~\ref{lem:az}(1) holds.

In case \textup{(U)}, $\sigma_{P_i}$ fixes $g(\gamma P_i)$ for
every $\gamma\in G_K$. Applying \textup{(U)} at this point gives
\[
 \norm{B_p}_v(\gamma g(P_i))\leq p^{-1}.
\]
Since $g$ is finite, the points $g(P_i)$ include a distinct
subsequence, with heights $d^m\hh f(P_i)\to0$. This contradicts
Lemma~\ref{lem:obstruction}.

In case \textup{(R)}, Lemma~\ref{lem:close} gives
$\delta_v(\gamma P_i,\gamma Q_i)\leq p^{-1/p}$ for every
$\gamma\in G_K$. The map $g$ has separable good reduction by
Remark~\ref{rem:all-iterates}. Lemma~\ref{lem:collision}, applied
to all conjugates of~\eqref{eq:equalimages}, yields
$$
 \norm{J_g}_v(\gamma P_i)\leq p^{-1/p}<1,
 \qquad\widetilde J_g\neq0.
$$
This again contradicts Lemma~\ref{lem:obstruction}.
\end{proof}

\begin{corollary}\label{thm:targets}
Let $f\in K(z)$ be nonexceptional, and let
$\mathcal B\subseteq\P^1(\Kbar)$ have bounded Weil height.
There is an $N\geq1$ such that
$$
 \{P\in\P^1(\Kab):\iter f n(P)\in\mathcal B
                         \text{ for some }n\geq N\}
$$
is finite.
\end{corollary}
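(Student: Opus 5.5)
The plan is to deduce the corollary directly from Theorem~\ref{thm:small}, using the scaling of the canonical height along forward orbits. Let $\varepsilon>0$ be as in Theorem~\ref{thm:small}, so that $\mathcal E=\{P\in\P^1(\Kab):\hh f(P)<\varepsilon\}$ is finite. Since $\mathcal B$ has bounded Weil height, say $h\le M$ on $\mathcal B$, the comparison $\hh f=h+O_f(1)$ from~\eqref{eq:canonical} gives a constant $M'$ (depending on $f$ and $M$) with $\hh f(\beta)\le M'$ for every $\beta\in\mathcal B$.

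Next I would choose $N\ge1$ with $d^{-N}M'<\varepsilon$. If $P\in\P^1(\Kab)$ satisfies $\iter f n(P)\in\mathcal B$ for some $n\ge N$, then $\hh f\circ f=d\hh f$ gives
\[
 \hh f(P)=d^{-n}\hh f(\iter f n(P))\le d^{-N}M'<\varepsilon,
\]
so $P\in\mathcal E$. Hence the set in the statement is contained in the finite set $\mathcal E$, and the corollary follows.

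The only points needing care are, first, that the constant in $\hh f=h+O_f(1)$ is uniform on all of $\P^1(\Kbar)$, which is the standard statement cited from \cite{Sil07}, and second, that we work over the original $K$. The enlargement of $K$ inside the proof of Theorem~\ref{thm:small} causes no trouble: $\Kab$ only grows under finite extensions, so finiteness over the larger field implies finiteness here. The hypothesis $n\ge N$ is exactly what makes the height bound uniform; I expect no real obstacle beyond this bookkeeping, since all the substance lies in Theorem~\ref{thm:small}.
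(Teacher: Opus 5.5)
Your proposal is correct and is essentially the paper's argument. Both bound $\hh f$ on $\mathcal B$ via $\hh f=h+O_f(1)$, choose $N$ with $d^{-N}$ times that bound below the $\varepsilon$ of Theorem~\ref{thm:small}, and use $\hh f\circ f=d\hh f$ to place the set inside the finite set of small abelian points.
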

\begin{proof}
Assume $\mathcal B\ne\varnothing$. By~\eqref{eq:canonical},
$B:=\sup_{Q\in\mathcal B}\hh f(Q)<\infty$. Take $\varepsilon$
as in Theorem~\ref{thm:small} and choose $N$ with $d^{-N}B<\varepsilon$.
If $\iter f n(P)\in\mathcal B$ and $n\geq N$, then
$\hh f(P)\leq d^{-N}B<\varepsilon$. The theorem gives finiteness.
\end{proof}

Following~\cite[Definition~1.1]{Che18}, call $f$
\emph{$\mathcal B$-avoiding over $M$} if
$f^{-1}(\mathcal B)\cap\P^1(M)$ is finite, and \emph{strongly
$\mathcal B$-avoiding over $M$} if
$\bigcup_{n\geq1}f^{-n}(\mathcal B)\cap\P^1(M)$ is finite.
\begin{corollary}\label{cor:avoid}
Let $f\in K(z)$ be nonexceptional, let
$K\subseteq M\subseteq\Kab$, and let $\mathcal B$ have bounded
Weil height. Then $f$ is $\mathcal B$-avoiding over $M$ if and only
if it is strongly $\mathcal B$-avoiding over $M$.
\end{corollary}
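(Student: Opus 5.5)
One direction is immediate. Strongly $\mathcal B$-avoiding over $M$ implies $\mathcal B$-avoiding over $M$, because $f^{-1}(\mathcal B)\cap\P^1(M)$ is one of the sets in the union $\bigcup_{n\geq1}f^{-n}(\mathcal B)\cap\P^1(M)$. The real content is the converse. Assume $f^{-1}(\mathcal B)\cap\P^1(M)$ is finite. We want finiteness of $f^{-n}(\mathcal B)\cap\P^1(M)$ for all $n\geq1$ at once.

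The plan is to split the union at the threshold $N$ given by Corollary~\ref{thm:targets}, applied to $f$ and the bounded-height set $\mathcal B$. Since $M\subseteq\Kab$, that corollary shows that the tail
\[
 \bigcup_{n\geq N}f^{-n}(\mathcal B)\cap\P^1(M)
\]
is finite. It then remains to handle the finitely many levels $1\leq n<N$. For these it suffices to show that each single level $f^{-n}(\mathcal B)\cap\P^1(M)$ is finite.

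For a fixed level $n$, I would argue by induction, reducing to the hypothesis at level one. Suppose $P\in f^{-n}(\mathcal B)\cap\P^1(M)$ with $n\geq2$. Then $f^{n-1}(P)\in f^{-1}(\mathcal B)$. Moreover $f^{n-1}(P)\in\P^1(M)$, because $f$ has coefficients in $K\subseteq M$. So $f^{n-1}(P)$ lies in the finite set $f^{-1}(\mathcal B)\cap\P^1(M)$. Each point of that finite set has at most $d^{n-1}$ preimages under $f^{\circ(n-1)}$. Hence level $n$ contains at most $d^{n-1}\cdot\#\bigl(f^{-1}(\mathcal B)\cap\P^1(M)\bigr)$ points.

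Combining the finitely many finite levels below $N$ with the finite tail gives strong avoidance. The only substantive input is Corollary~\ref{thm:targets}, which in turn rests on Theorem~\ref{thm:small}. I expect no real obstacle beyond checking that forward images of $M$-points stay in $M$, which holds because $K\subseteq M$ and $f\in K(z)$. Note that this bounded-level step does not even need the bounded-height assumption; that assumption is used only through the corollary for the tail.
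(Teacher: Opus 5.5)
Your proposal is correct and matches the paper's proof: you show each level $n<N$ is finite by sending $f^{n-1}(P)$ into the finite set $f^{-1}(\mathcal B)\cap\P^1(M)$ and counting preimages, and you handle all levels $n\geq N$ together with Corollary~\ref{thm:targets}. Your explicit checks are also valid, namely that $f^{n-1}(P)\in\P^1(M)$ because $f\in K(z)$, and that the bounded-height hypothesis is needed only for the tail.
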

\begin{proof}
Only the forward implication requires proof. Put
$S_1=f^{-1}(\mathcal B)\cap\P^1(M)$. If $P\in\P^1(M)$ and
$\iter f n(P)\in\mathcal B$, then $\iter f{n-1}(P)\in S_1$.
Thus the level-$n$ set is contained in the finite set
$f^{-(n-1)}(S_1)$. Corollary~\ref{thm:targets} controls all levels
$n\geq N$ at once; the remaining levels form a finite union.
\end{proof}

For $A\geq1$, let $\mathcal I_A$ be the algebraic integers in
$\Kbar$ of house at most $A$, where the house of $\beta$ is
$\max_{\tau:\Q(\beta)\hookrightarrow\C}\abs{\tau(\beta)}$.
Then $\mathcal I_1=\{0\}\cup\mu_\infty$ and $h(\beta)\leq\log A$
for $\beta\in\mathcal I_A$.
\begin{corollary}\label{cor:house}
Let $f\in K(z)$ be nonexceptional and let $M=\Kcyc$ or $\Kab$.
For $\mathcal B=\mu_\infty$, or $\mathcal B=\mathcal I_A$ with
$A\geq1$, one-step $\mathcal B$-avoidance over $M$ implies strong
$\mathcal B$-avoidance over $M$.
\end{corollary}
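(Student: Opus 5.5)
This is a direct application of Corollary~\ref{cor:avoid}. To use it we must check two things: that the sets $\mathcal B=\mu_\infty$ and $\mathcal B=\mathcal I_A$ have bounded Weil height, and that $M=\Kcyc$ and $M=\Kab$ satisfy $K\subseteq M\subseteq\Kab$.

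For the heights, every root of unity has Weil height zero, so $\mu_\infty$ has bounded height. For $\beta\in\mathcal I_A$ we use the bound $h(\beta)\leq\log A$ recorded just before the statement. It follows from the definition of the house: $\beta$ is an algebraic integer, so its nonarchimedean contributions vanish, and each archimedean contribution is $\log\max(1,\abs{\tau\beta})\leq\log A$. Since $\mathcal I_A\subseteq\P^1(\Kbar)$, this set also has bounded height, with $\sup h\leq\log A$.

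For the fields, both $\Kcyc=K(\mu_\infty)$ and $\Kab$ contain $K$. Moreover $\Kcyc$ is abelian over $K$, being a subfield of the cyclotomic closure, so $K\subseteq\Kcyc\subseteq\Kab$.

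Now Corollary~\ref{cor:avoid}, applied with $M\in\{\Kcyc,\Kab\}$ and $\mathcal B\in\{\mu_\infty,\mathcal I_A\}$, states that $\mathcal B$-avoidance over $M$ is equivalent to strong $\mathcal B$-avoidance over $M$. This contains the implication claimed.

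None of these steps is a real obstacle. The only point needing a moment's care is the height estimate for $\mathcal I_A$, and that is the elementary remark above. Note that Theorem~\ref{thm:small} enters only through Corollary~\ref{thm:targets}, so no further arithmetic input is needed beyond the already established finiteness of abelian points of small canonical height.
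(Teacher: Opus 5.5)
Your proposal is correct and follows the paper's proof, which simply notes that both targets have bounded Weil height and invokes Corollary~\ref{cor:avoid}. Your additional checks, namely the house bound $h(\beta)\leq\log A$ and the inclusions $K\subseteq\Kcyc\subseteq\Kab$, just spell out what the paper leaves implicit.
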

\begin{proof}
Both targets have bounded Weil height, so apply
Corollary~\ref{cor:avoid}.
\end{proof}

\begin{example}
For $f(z)=z^2+1$ and every root of unity $\zeta$, we have
$f(\zeta)=\zeta^2+1\in\mathcal I_2$. Thus
$f^{-1}(\mathcal I_2)$ contains infinitely many cyclotomic points.
The critical point $0$ has the infinite orbit $0,1,2,5,26,\ldots$,
so $f$ is nonexceptional.
\end{example}

\section{Abelian points for Latt\`es maps}\label{sec:elliptic}
By the classification of Latt\`es maps~\cite[Theorem~3.1]{Mil06},
there are an elliptic curve $E/\Kbar$, a cyclic group
$\Gamma\subseteq\Aut(E,O)$ of order $2$, $3$, $4$, or $6$, and
a quotient map $\pi:E\to E/\Gamma\simeq\P^1$ such that
\begin{equation}\label{eq:lattes}
 f\circ\pi=\pi\circ\psi,
 \qquad \psi(P)=a(P)+b,\qquad \deg a=\deg f=d.
\end{equation}
Write $\widehat h_E$ for the N\'eron--Tate height associated to $(O)$.
The map $\psi$ commutes with a generator $\zeta$ of $\Gamma$, so
$(1-\zeta)b=0$ and $b$ is torsion. Consequently
$$
 \PrePer(f)=\pi(E_{\tors}),
 \qquad \pi^{-1}(\PrePer(f))=E_{\tors}.
$$
Indeed, torsion points have finite $\psi$-orbits. Conversely, a lift of
a preperiodic point has finite orbit because $\pi$ has finite fibers;
the relation $\widehat h_E(\psi(P))=d\widehat h_E(P)$ then forces it
to be torsion. The same finite-fiber argument shows that, for any
finite morphism $\theta$ satisfying $f\circ\theta=\theta\circ a$,
$\theta(P)$ is preperiodic if and only if $P$ is torsion.

\begin{proposition}\label{prop:lattes-wandering}
Let $f\in K(z)$ be a Latt\`es map and
$\alpha\in\P^1(K)\setminus\PrePer(f)$. Then
$f^{-\infty}(\alpha)\cap\P^1(\Kab)$ is finite.
\end{proposition}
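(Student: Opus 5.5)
We lift the problem to the elliptic curve and reduce it to a Bogomolov-type statement there. Since $\alpha$ is not preperiodic, $\hh f(\alpha)>0$. Every $P\in f^{-n}(\alpha)$ satisfies $\hh f(P)=d^{-n}\hh f(\alpha)\to0$, so $f^{-\infty}(\alpha)$ consists of nonpreperiodic points of small height. The plan is to show that there is no infinite set of abelian nonpreperiodic points of $f$ whose heights tend to $0$.

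\textbf{Step 1: pass to $E$.} Use~\eqref{eq:lattes}. Enlarge $K$ by a finite extension $K'$ so that $E$, $\pi$, $a$ and $b$ are defined over $K'$; this is harmless since $\Kab\subseteq(K')^{\mathrm{ab}}$. Normalize $\widehat h_E$ so that $\hh f\circ\pi=c\,\widehat h_E$ for a constant $c>0$. This holds because both sides are canonical heights for the $\psi$-dynamics, as in Lemma~\ref{lem:semiheight}. Suppose there are infinitely many distinct $P_n\in f^{-\infty}(\alpha)\cap\P^1(\Kab)$. Choose lifts $X_n\in\pi^{-1}(P_n)$. These satisfy $[K'(X_n):K'(P_n)]\leq\#\Gamma\leq6$, $\widehat h_E(X_n)\to0$, and $X_n\notin E_{\tors}$.

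\textbf{Step 2: a Bogomolov statement on $E$ over a bounded extension of $\Kab$.} The cleanest route is the known theorem of Baker and Silverman: for an elliptic curve over a number field, the Néron--Tate height has the Bogomolov property over $K'^{\mathrm{ab}}$. Baker proved the CM case, and Silverman the non-CM case via local arguments at a place of supersingular or split multiplicative reduction. We need a mild variant, since the $X_n$ lie in extensions of degree at most $6$ over $\Kab$ rather than in $\Kab$ itself.

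There are two ways to handle this. The first is to note that Baker--Silverman's proofs give a uniform lower bound on $\widehat h_E$ for nontorsion points whose field of definition has bounded degree over $\Kab$. The second is to avoid lifts by working with a $\Gamma$-invariant function. For instance, for $\Gamma=\{\pm1\}$ one replaces $X$ by the multiset $\{X,-X\}$, and the relevant height on $\pi(E)$ is still controlled by the abelian field $K'(P_n)$. I would try the first route and invoke Baker--Silverman directly on $E$ over $L:=K'(E[\#\Gamma\text{-related data}])^{\mathrm{ab}}$, or over the compositum of $K'^{\mathrm{ab}}$ with the finitely many quadratic, cubic or sextic Kummer-type extensions involved.

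To bound the latter, one uses that the fields $K'(X_n)/K'(P_n)$ are generated by roots of bounded-degree equations. A cleaner alternative is to use $\psi$-equivariance: after replacing $X_n$ by $[\#\Gamma]X_n$, the point is fixed by $\Gamma$ up to sign. Its coordinates are then rational functions of $\pi$-values over $K'$, so $[\#\Gamma]X_n\in E(\Kab)$ with height $(\#\Gamma)^2\widehat h_E(X_n)\to0$, and it is still nontorsion.

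\textbf{Step 3: conclude.} Step 2 yields an infinite set of nontorsion points of $E(\Kab)$ with Néron--Tate height tending to $0$. This contradicts Baker--Silverman's Bogomolov theorem for $E$ over $\Kab$. Hence $f^{-\infty}(\alpha)\cap\P^1(\Kab)$ is finite.

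\textbf{Main obstacle.} The delicate point is Step 2: making sure the lifted points, or a fixed multiple of them, are genuinely defined over an abelian extension, given that $\pi$ is not injective. If the multiplication trick fails to land in $E(\Kab)$, I would fall back on citing the Baker--Silverman bound in its form for fields of uniformly bounded degree over $\Kab$.
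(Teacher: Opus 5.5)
Your Steps 1 and 3 are fine, but Step 2, which you flag yourself as the main obstacle, is a genuine gap, and neither of your fallbacks closes it.

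\textbf{The multiplication trick does nothing.} Every $\zeta\in\Gamma$ commutes with $[m]$. So if $\sigma\in G_{(K')^{\mathrm{ab}}}$ acts by $\sigma X_n=\zeta X_n$, then $\sigma([m]X_n)=\zeta\,[m]X_n$. For nontorsion $X_n$, the point $[m]X_n$ is therefore rational over $(K')^{\mathrm{ab}}$ exactly when $X_n$ is. Passing to $\Gamma$-invariant functions does not help either: it just returns you to $\hh f$ on $\P^1$, i.e.\ to the original problem.

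\textbf{Baker--Silverman does not cover your points.} \cite[Theorem~0.1]{BS04} gives a lower bound only on $E(M^{\mathrm{ab}})$ for a fixed number field $M$. Nothing in your argument places the varying extensions $(K')^{\mathrm{ab}}(X_n)$, of degree at most $6$, inside one such $M^{\mathrm{ab}}$. A Bogomolov bound for points of bounded degree over $\Kab$ is a different statement that you would have to prove; it cannot simply be read off from the Baker--Silverman proofs.

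\textbf{The underlying problem.} You retain only $\hh f(P_n)\to0$, so you are aiming at the Bogomolov property of $\Kab$ for the Latt\`es height itself. In doing so you discard the one piece of structure that makes the lifts abelian: all the $P_n$ lie over the fixed non-preperiodic point $\alpha$.

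\textbf{The missing idea: use $\alpha$ to remove the $\Gamma$-ambiguity.}
\begin{enumerate}
\item Take a finite $L/K$ over which $E,\pi,a,b$, the elements of $\Gamma$, and the finite fibre $\pi^{-1}(\alpha)$ are all defined.
\item If $\iter f n(P)=\alpha$ and $\pi(X)=P$, then $\iter\psi n(X)=a^nX+b_n$ lies in $\pi^{-1}(\alpha)\subseteq E(L)$. Here $b_n$ is torsion and defined over $L$.
\item Any $\sigma\in G_{L^{\mathrm{ab}}}$ fixes $P$, so $\sigma X=\zeta X$ for some $\zeta\in\Gamma$.
\item Apply $\sigma$ to the $L$-rational point $a^nX+b_n$. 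This gives $a^n(\zeta-1)X=0$.
\item Since $\alpha\notin\PrePer(f)$, the point $X$ is nontorsion, which forces $\zeta=1$. Hence $X\in E(L^{\mathrm{ab}})$.
\item Now $d^n\widehat h_E(X)=\widehat h_E(\iter\psi n(X))\le\max_{T\in\pi^{-1}(\alpha)}\widehat h_E(T)$. Combined with Baker--Silverman over $L^{\mathrm{ab}}$, this bounds $n$.
\end{enumerate}
Since each level $f^{-n}(\alpha)$ is finite, this gives the finiteness, and it is exactly the paper's argument. With this replacement for Step 2, your Steps 1 and 3 go through.
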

\begin{proof}
Choose a finite extension $L/K$ over which the presentation
\eqref{eq:lattes}, the elements of $\Gamma$, and the points of
$\pi^{-1}(\alpha)$ are defined. We have $\Kab\subseteq L^{\mathrm{ab}}$.
Let $Q\in\P^1(\Kab)$ satisfy $f^n(Q)=\alpha$, and choose $P$ with
$\pi(P)=Q$. Then $P$ is nontorsion and
\[
 \psi^{\circ n}(P)=a^nP+b_n\in\pi^{-1}(\alpha)\subseteq E(L).
\]
For $\sigma\in G_{L^{\mathrm{ab}}}$, the quotient property gives
$\sigma P=\zeta P$ for some $\zeta\in\Gamma$. Applying $\sigma$ to
the displayed equality gives $a^n(\zeta-1)P=0$. If $\zeta\ne1$,
this is a nonzero isogeny, contradicting that $P$ is nontorsion.
Thus $P\in E(L^{\mathrm{ab}})$.

The point $b_n$ is torsion. Hence
\[
 d^n\widehat h_E(P)
 =\widehat h_E(\psi^{\circ n}(P))
 \le M:=\max_{T\in\pi^{-1}(\alpha)}\widehat h_E(T).
\]
Baker--Silverman~\cite[Theorem~0.1]{BS04} gives
$\widehat h_E(P)\ge c_{E,L}>0$. Therefore $d^n c_{E,L}\le M$,
so $n$ is bounded. Each level of the backward orbit is finite.
\end{proof}

\subsection{Non-CM Latt\`es maps}
Let $E/K$ be a non-CM elliptic curve. Serre's open-image
theorem~\cite{Ser72} gives an open subgroup
$G:=\rho_E(G_K)\subseteq\GL_2(\widehat\Z)$. Moreover,
$$
 H:=\rho_E(G_{\Kab})=\overline{[G,G]},
 \qquad I:=[\SL_2(\widehat\Z):H]<\infty;
$$
see~\cite[Theorem~1.1 and Lemmas~2.1(i), 3.3]{Zyw25}.

\begin{proposition}\label{prop:torsion}
For every $P\in E_{\tors}$ of exact order $n$,
$$
 [\Kab(P):\Kab]
 \ge\frac{n^2}{I}\prod_{\ell\mid n}(1-\ell^{-2})
 \ge\frac{n^2}{I\zeta(2)}.
$$
Consequently, every non-CM Latt\`es map $f\in K(z)$ satisfies
$\#(\PrePer(f)\cap\P^1(\Kab))<\infty$.
\end{proposition}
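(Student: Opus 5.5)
The degree $[\Kab(P):\Kab]$ is the size of the $\Gal(\Kbar/\Kab)$-orbit of $P$. Through $\rho_E$ this orbit is the orbit of $P$ under $H$ acting on $E[n]\simeq(\Z/n\Z)^2$, so the plan is to bound that orbit from below. Fix a basis of $E[n]$, and write $\rho_n$ for the mod-$n$ reduction and $H_n$ for the image of $H$ in $\SL_2(\Z/n\Z)$. Since $[\SL_2(\widehat\Z):H]=I$, reduction gives $[\SL_2(\Z/n\Z):H_n]\le I$. The point $P$ has exact order $n$, so its coordinate vector $x\in(\Z/n\Z)^2$ is primitive (its entries generate $\Z/n\Z$). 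Hence the orbit $\SL_2(\Z/n\Z)\cdot x$ is the full set of primitive vectors. That set has cardinality $n^2\prod_{\ell\mid n}(1-\ell^{-2})$, by counting via CRT and at each prime power $\ell^k$ removing the $\ell^{2k-2}$ vectors lying in $\ell(\Z/\ell^k\Z)^2$.

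The orbit-size comparison goes as follows. Let $S$ be the stabilizer of $x$ in $\SL_2(\Z/n\Z)$. Then
\[
\#(H_n x)=[H_n:H_n\cap S]\ \ge\ \frac{[\SL_2:S]}{[\SL_2:H_n]}\ \ge\ \frac{1}{I}\,\#(\SL_2(\Z/n\Z)\,x).
\]
This uses the general inequality $[G:S]\le[G:H_n][H_n:H_n\cap S]$, which holds because the map $H_n/(H_n\cap S)\to G/S$ is injective and the image meets every $H_n$-orbit on $G/S$ ... more precisely, we just need that $G/S$ is covered by at most $[G:H_n]$ translates of the image. This gives the first inequality. The second follows from $\prod_{\ell\mid n}(1-\ell^{-2})\ge\prod_\ell(1-\ell^{-2})=\zeta(2)^{-1}$.

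For the consequence, suppose $\PrePer(f)\cap\P^1(\Kab)$ were infinite. Use the presentation~\eqref{eq:lattes}, after enlarging $K$ so that $E$, $\pi$ and $\Gamma$ are defined over $K$; this is harmless since $\Kab$ grows. Since $\PrePer(f)=\pi(E_\tors)$, we would get torsion points $P$ of unbounded order $n$ with $\pi(P)\in\P^1(\Kab)$. As $\pi$ is the quotient by $\Gamma$, for every $\sigma\in G_{\Kab}$ we have $\sigma P=\zeta P$ for some $\zeta\in\Gamma$. Hence the orbit has size $[\Kab(P):\Kab]\le\#\Gamma\le6$. But $E$ is non-CM because $f$ is a non-CM Lattès map, so the bound $n^2/(I\zeta(2))\le 6$ applies and bounds $n$. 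Finitely many orders give finitely many torsion points, which is a contradiction.

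The main subtle step is justifying $[\SL_2(\Z/n\Z):H_n]\le I$ together with the identification of $\Gal(\Kab(P)/\Kab)$-orbits with $H_n$-orbits. Both follow from the cited description $H=\rho_E(G_{\Kab})$ and the surjectivity of reduction $\SL_2(\widehat\Z)\to\SL_2(\Z/n\Z)$. One must also make sure that "non-CM Lattès map" yields a non-CM curve $E$ over a finite extension of $K$, so that Serre's theorem applies there.
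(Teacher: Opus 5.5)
Your proposal is correct and follows essentially the same route as the paper: you bound the $H$-orbit of a primitive vector below by $1/I$ times the full $\SL_2$-orbit. Then, after enlarging $K$ so that the Latt\`es presentation is defined, you observe that the Galois conjugates of a lift $P$ stay in the fiber $\Gamma P$, so $[\Kab(P):\Kab]\le\#\Gamma=\deg\pi$, which bounds the orders. Your justification of the index inequality is tangled, but the chain $[G:S]\le[G:H_n\cap S]=[G:H_n][H_n:H_n\cap S]$ gives exactly what you need.
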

\begin{proof}
The case $n=1$ is immediate. For $n>1$, the points of exact order $n$ in $E[n]\simeq(\Z/n\Z)^2$ are
the primitive vectors. The group $\SL_2(\widehat\Z)$ acts transitively
on them, and their number is
$n^2\prod_{\ell\mid n}(1-\ell^{-2})$. An index-$I$ subgroup has
orbits of size at least $1/I$ of this number. The $H$-orbit of $P$
has size $[\Kab(P):\Kab]$, proving the first inequality; the Euler
product for $\zeta(2)$ proves the second. In particular, only finitely
many torsion points have bounded degree over $\Kab$.

For the consequence, choose a finite extension $L/K$ over which a
normalized presentation of $f$ is defined. If
$\beta=\pi(P)\in\P^1(\Kab)$ is preperiodic, then $P$ is torsion
and its $G_{L^{\mathrm{ab}}}$-orbit lies in a fiber of $\pi$.
Thus $[L^{\mathrm{ab}}(P):L^{\mathrm{ab}}]\le\deg\pi$.
Applying the first assertion over $L$ gives only finitely many such
$P$, hence only finitely many $\beta$.
\end{proof}

\subsection{CM Latt\`es maps}
\begin{lemma}\label{lem:source-descent}
Every Latt\`es map $f\in K(z)$ admits an elliptic curve $E/K$,
an endomorphism $a\in\End_{\Kbar}(E)$ of degree $\deg f$, and a
finite morphism $\theta:E_{\Kbar}\to\P^1_{\Kbar}$ satisfying
$$
 f\circ\theta=\theta\circ a.
$$
\end{lemma}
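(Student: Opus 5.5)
We have to show: for every Latt\`es map $f\in K(z)$ there is an elliptic curve $E$ defined over $K$ itself (not just over $\Kbar$), an endomorphism $a$ of degree $\deg f$, and a finite $\theta$ with $f\circ\theta=\theta\circ a$. Only $\theta$ and $a$ may live over $\Kbar$. The plan is to start from the $\Kbar$-presentation \eqref{eq:lattes}, $f\circ\pi=\pi\circ\psi$ with $\psi=a_0+b$ on some $E_0/\Kbar$. There are then two tasks: remove the translation $b$, and replace $E_0$ by a curve defined over $K$.

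\textbf{Removing the translation.} We showed above that $b$ is torsion and $(1-\zeta)b=0$ for a generator $\zeta$ of $\Gamma$. Since $1-\zeta$ is a nonzero endomorphism commuting with $a_0$, we try to write $b=(1-a_0)c$ for some $c\in E_0$. This is possible because $1-a_0$ is surjective: $\deg a_0=d\geq2$, so $a_0\neq1$. Then, with $\tau_c(P)=P+c$, we have $\tau_c^{-1}\circ\psi\circ\tau_c=a_0$. Set $\theta_0=\pi\circ\tau_c$, which is finite. Then
\[
 f\circ\theta_0=\pi\circ\psi\circ\tau_c=\pi\circ\tau_c\circ a_0=\theta_0\circ a_0.
\]
So we obtain a semiconjugacy with a genuine endomorphism $a_0$ of degree $d$, at the cost of $\theta_0$ no longer being a $\Gamma$-quotient. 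The statement does not require it to be one.

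\textbf{Descending $E_0$ to $K$.} The $j$-invariant $j(E_0)$ should lie in $K$. The natural argument is that for $\sigma\in G_K$, the map $\sigma f=f$ is semiconjugate to $\sigma a_0$ on $\sigma E_0$. By the uniqueness part of the Latt\`es classification (Milnor's normal form), the minimal elliptic cover is determined up to isomorphism by $f$, so $\sigma E_0\simeq E_0$.

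This uniqueness is the step I expect to be the main obstacle. I would try two routes. The first is to quote uniqueness from \cite{Mil06} directly. The second avoids it: taking the flexible case aside, the lattice of the orbifold cover is determined by $f$ up to homothety, because the orbifold structure of $f$, given by its postcritical set with ramification weights, is $G_K$-stable. This gives $j(E_0)\in K$.

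Once $j\in K$, choose any $E/K$ with $j(E)=j(E_0)$ and a $\Kbar$-isomorphism $\iota:E\to E_0$. Set
\[
 a=\iota^{-1}a_0\iota\in\End_{\Kbar}(E),\qquad \theta=\theta_0\circ\iota .
\]
Then $\deg a=d$, $\theta$ is finite, and $f\circ\theta=\theta\circ a$. Twists do not matter here, because $a$ and $\theta$ are allowed to be defined only over $\Kbar$.
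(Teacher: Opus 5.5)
Your removal of the translation (solving $(1-a_0)c=b$ and setting $\theta_0=\pi\circ\tau_c$) and your final twist to a curve $E/K$ with the same $j$-invariant are exactly what the paper does, and both are fine. The gap is the one step with real content, namely $j(E_0)\in K$. Route 1 appeals to a uniqueness statement in \cite{Mil06} that you never pin down. Route 2, the only one you sketch, is explicitly restricted to non-flexible maps, and you never come back to the flexible case.

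The flexible maps ($|\Gamma|=2$, $a_0=[n]$) form one-parameter families in every degree $n^2$. For them $j(E_0)$ is not constrained a priori, so as written the lemma is unproved for them. The exclusion is also unfounded. Flexibility means that $f$ deforms together with $E_0$; it does not mean that a fixed $f$ fails to determine $E_0$.

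The missing ingredient is concrete and works for every Latt\`es map.
\begin{itemize}
\item If $|\Gamma|\in\{3,4,6\}$, then $E_0$ has an automorphism of order $3$, $4$ or $6$, so $j(E_0)\in\{0,1728\}\subseteq K$.
\item If $|\Gamma|=2$, flexible or not, then $\pi$ is the double cover of $\P^1$ branched at four points. These four points form the postcritical set of $f$ \cite[Lemma~3.4]{Mil06}, which is $G_K$-stable because $f\in K(z)$. If $\lambda$ is their cross-ratio in any ordering, then $j(E_0)=256(1-\lambda+\lambda^2)^3/(\lambda^2(1-\lambda)^2)$. This expression is invariant under reordering the branch points, so it is fixed by $G_K$ and lies in $K$.
\end{itemize}
This is what your orbifold remark should become, with no split between flexible and rigid maps. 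It also gives, in the $|\Gamma|=2$ case, the isomorphism $E_0^\sigma\simeq E_0$ that route 1 needed.
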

\begin{proof}
Take a normalized presentation $f\circ\pi=\pi\circ(a_0+b)$ on
$E_0/\Kbar$. Since $b$ is torsion and $1-a_0$ is an isogeny,
every solution of $(1-a_0)c=b$ is torsion. The map
$\theta_0(P)=\pi(P+c)$ satisfies
$f\circ\theta_0=\theta_0\circ a_0$.

It remains to show $j(E_0)\in K$. If $|\Gamma|=3$, $4$, or $6$,
then $j(E_0)\in\{0,1728\}$. If $|\Gamma|=2$, the four branch
points of $\pi$ form the postcritical set of $f$
\cite[Lemma~3.4]{Mil06}, hence are $G_K$-stable. For any ordering
of these points, let $\lambda$ be their cross-ratio. Then
\[
 j(E_0)=256\frac{(1-\lambda+\lambda^2)^3}
                       {\lambda^2(1-\lambda)^2}.
\]
This expression is invariant under permutations of the four branch
points, so it belongs to $K$. Choose $E/K$ with this $j$-invariant
and an isomorphism $\varphi:E_{\Kbar}\to E_0$
\cite[Chapter~III, Sections~1 and~10]{Sil09}. Set
$a=\varphi^{-1}a_0\varphi$ and $\theta=\theta_0\varphi$.
\end{proof}

\begin{proposition}\label{prop:bounded-degree-torsion}
Let $E/K$ have geometric CM by an order $\mathcal O$ in an imaginary
quadratic field $F\not\subseteq K$. For every $D\ge1$, the set
$$
 \{P\in E_{\tors}:[\Kab(P):\Kab]\le D\}
$$
is finite.
\end{proposition}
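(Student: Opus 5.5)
The plan is to compare the Galois action of $G_{\Kab}$ on $E_{\tors}$ with the Galois action over $KF$. We show that $G_{\Kab}$ still contains a large piece of the CM Galois image, so that torsion points of large order have large degree over $\Kab$.

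Put $K'=KF$, a quadratic extension of $K$ because $F\not\subseteq K$. By CM theory, $\rho_E(G_{K'})$ is an open subgroup of $(\mathcal O\otimes\widehat\Z)^\times$ acting on $E_{\tors}\simeq(F/\mathcal O)$, a free rank-one $\widehat{\mathcal O}$-module up to finite index. Moreover $G_K$ acts through an open subgroup of the normalizer $N=(\mathcal O\otimes\widehat\Z)^\times\rtimes\langle c\rangle$, and any $\sigma\in G_K\setminus G_{K'}$ maps to an element acting semilinearly by complex conjugation $c$. The group $G_{\Kab}$ is the closed commutator subgroup of $G_K$, so its image contains the closure of the commutators $[c\,x,y]$ with $x,y$ in the image $U$ of $G_{K'}$. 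Since $U$ is abelian, these commutators reduce to elements of the form $\bar y/y$ for $y\in U$.

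The first step is to show that $\{\bar y/y: y\in U\}$ is open in the norm-one torus $T^1=\{z\in\widehat{\mathcal O}^\times: z\bar z=1\}$, which has "rank one" at each prime, i.e.\ is of $\ell$-adic dimension one. This holds because $y\mapsto \bar y/y$ maps an open subgroup of $\widehat{\mathcal O}^\times$ onto an open subgroup of $T^1$; at each $\ell$ it is a submersion of $\ell$-adic Lie groups, and for almost all $\ell$ the surjectivity onto $T^1(\Z_\ell)$ can be checked by Hilbert 90 type arguments on $(\mathcal O\otimes\Z_\ell)^\times$, up to bounded index. This gives a subgroup $H\subseteq\rho_E(G_{\Kab})\cap T^1$ of finite index $I$ in $T^1$.

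The second step is counting. Let $P$ have exact order $n$. Its $T^1$-orbit inside the $\widehat{\mathcal O}$-orbit of $P$ has size comparable to $\#T^1(\mathcal O/n\mathcal O)$ divided by the stabilizer. Since $E[n]\simeq\mathcal O/n\mathcal O$ (up to bounded index) and $P$ generates a submodule of index at most bounded times $n$-smooth factors, the orbit has size at least $c\,n^{1-\epsilon}$, or at least $c\,n/\log\log n$. Indeed, $\#T^1(\Z/\ell^k)\asymp \ell^{k}(1\pm 1/\ell)$, and the stabilizer of a point of order $n$ in the torus is trivial modulo the conductor of $\mathcal O$. Hence $[\Kab(P):\Kab]\geq \#(H\cdot P)\geq c\,n/(I\log\log n)\to\infty$, so bounded degree forces bounded order, and there are only finitely many torsion points of bounded order.

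I expect the main obstacle to be the first step: carefully justifying that the commutators indeed produce an open subgroup of $T^1$ uniformly across all primes $\ell$, especially at primes ramified in $F$ or dividing the conductor of $\mathcal O$, and that $G_K\neq G_{K'}$ genuinely yields an element acting by $c$-semilinear maps. If the uniform argument is delicate, I would instead invoke the explicit description of the image of Galois for CM curves (open in the normalizer of a Cartan, with index bounded). Then the needed bound on $[T^1:H]$ follows from the commutator of the normalizer of a Cartan being the norm-one part, up to a finite index coming from the finite index of $\rho_E(G_K)$ in $N$.
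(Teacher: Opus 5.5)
Your proposal is correct. Its first half is the paper's own argument: pick $\tau\in G_K\setminus G_{KF}$ and observe that $\rho([\tau,\sigma])=\bar c\,c^{-1}$ for $c=\rho(\sigma)$ with $\sigma\in G_{KF}$. This gives $\delta(U)=\prod_\ell\delta(U_\ell)\subseteq\rho(G_{\Kab})$ for a product-shaped open $U\subseteq\rho(G_{KF})$.

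You diverge from the paper in the finishing step.

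\textbf{Your route.} You bound from below the orbit of a point of exact order $n$ under the whole adelic norm-one torus. The bound is roughly $n\prod_{\ell\mid n}(1\mp\ell^{-1})\gg n/\log\log n$, and passing to the subgroup $H$ costs only the index $I$.

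\textbf{The paper's route.} It never counts orbits globally; it argues prime by prime.
\begin{itemize}
\item For $\ell$ outside a finite set $S$, the reduction of $V_\ell=\delta(C_\ell)$ is the norm-one group of order $\ell\mp1$. It acts freely on $E[\ell]\setminus\{0\}$, so $P_\ell\neq0$ forces $\ell-1\le D$.
\item For each of the finitely many remaining $\ell$, the paper takes an element $v_\ell\in V_\ell$ of infinite order. Combined with $\#(V_\ell P_\ell)\le D$, this gives $(v_\ell^k-1)P_\ell=0$ for some $k\le D$. Since $v_\ell^k-1$ has finite kernel on $E[\ell^\infty]$, there are finitely many possibilities for $P_\ell$.
\end{itemize}

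\textbf{What each buys.} The paper's route is softer. It uses only the mod-$\ell$ image of $\delta(C_\ell)$ at good primes (finite-field Hilbert 90) and the infinitude of $V_\ell$ at bad ones. It needs no stabilizer computations at primes ramified in $F$ or dividing the conductor, but it gives no growth rate. Yours gives the explicit bound $[\Kab(P):\Kab]\gg n/\log\log n$, a CM analogue of Proposition~\ref{prop:torsion}. The price is the local bookkeeping you anticipated.

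Two points in your write-up should be tightened.
\begin{itemize}
\item For $[T^1:\delta(U)]<\infty$ you need exact surjectivity $\delta(U_\ell)=T^1_\ell$ at all but finitely many $\ell$, not surjectivity ``up to bounded index.'' Nontrivial indices at infinitely many primes would give infinite index. Hilbert 90 does give exact surjectivity for $\ell$ unramified in $F$, prime to the conductor, with $U_\ell$ full.
\item The remark that $P$ ``generates a submodule of bounded index'' is not the right mechanism. In the split case, $P_\ell=(1,0)$ generates a submodule of index $\ell^k$. What you actually need is that the stabilizer of $P_\ell$ in $T^1_\ell$ lies in $1+\ell^{k-c_\ell}\mathcal O_{F,\ell}$, with $c_\ell=0$ at good primes and $c_\ell$ bounded independently of $k$ otherwise. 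This holds because $T^1_\ell$ acts on both split components, through $(w,w^{-1})$.
\end{itemize}
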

\begin{proof}
Let $\rho$ be the adelic Galois representation, and put
$G=\rho(G_K)$. For each prime $\ell$, take the integral CM Cartan
\[
 C_\ell=(F\otimes\Q_\ell)^\times
             \cap\Aut_{\Z_\ell}(T_\ell E),\qquad
 C=\prod_\ell C_\ell.
\]
CM theory gives
$H:=\rho(G_{KF})=G\cap C$ of index two in $G$, open in $C$;
the other coset acts on $C$ by $c\mapsto\bar c$
\cite[Theorem~6.6]{Lom17}. Taking the Cartans on the actual Tate
lattices accommodates nonmaximal CM orders; away from the conductor,
$C_\ell=(\mathcal O\otimes\Z_\ell)^\times$.

Choose $\tau\in G_K\setminus G_{KF}$. For
$\sigma\in G_{KF}$ and $c=\rho(\sigma)$,
$$
 \rho([\tau,\sigma])=\bar c\,c^{-1}.
$$
Every commutator lies in $G_{\Kab}$, so
$$
 \delta(H)\subseteq\rho(G_{\Kab}),
 \qquad \delta(c)=\bar c\,c^{-1}.
$$
Choose a product $U=\prod_\ell U_\ell\subseteq H$, where each
$U_\ell$ is open in $C_\ell$ and $U_\ell=C_\ell$ for all but
finitely many $\ell$. With $V_\ell=\delta(U_\ell)$, we obtain
$$
 V:=\prod_\ell V_\ell\subseteq\rho(G_{\Kab}).
$$
If $[\Kab(P):\Kab]\le D$, write $P=\sum_\ell P_\ell$ for its
primary decomposition. Then
\begin{equation}\label{eq:local-orbit-upper}
 \#(V_\ell P_\ell)\le D.
\end{equation}

Let $S$ contain $2$, the primes ramified in $F$, those dividing the
conductor of $\mathcal O$, and those with $U_\ell\ne C_\ell$.
For $\ell\notin S$, the reduction of $V_\ell$ is
\[
 \begin{cases}
 \{(u,u^{-1}):u\in\mathbf F_\ell^\times\},
       &\ell\text{ split in }F,\\
 \ker(N:\mathbf F_{\ell^2}^\times\to\mathbf F_\ell^\times),
       &\ell\text{ inert in }F.
 \end{cases}
\]
These groups have order $\ell-1$ and $\ell+1$, respectively, and
act freely on $E[\ell]\setminus\{0\}$. If $P_\ell\ne0$, a
multiple of $P_\ell$ has order $\ell$, so
$D\ge\#(V_\ell P_\ell)\ge\ell-1$. Only finitely many primes can
therefore divide the order of $P$.

Fix one such prime $\ell$. The map $c\mapsto\bar c/c$ has
one-dimensional image, so $V_\ell$ is open in the local norm-one
torus and contains an element $v_\ell$ of infinite order.
By~\eqref{eq:local-orbit-upper}, two of
$P_\ell,v_\ell P_\ell,\ldots,v_\ell^D P_\ell$ coincide. Thus
\begin{equation}\label{eq:vk-kills}
 (v_\ell^k-1)P_\ell=0\qquad\text{for some }1\le k\le D.
\end{equation}
The endomorphism $v_\ell^k-1$ is invertible on
$T_\ell E\otimes\Q_\ell$: this follows from $v_\ell^k\ne1$
when $F\otimes\Q_\ell$ is a field; in the split case,
$v_\ell=(u,u^{-1})$ and neither component of $v_\ell^k-1$ vanishes.
Its kernel on $E[\ell^\infty]$ is therefore finite.
Equation~\eqref{eq:vk-kills} leaves finitely many possibilities for
$P_\ell$. This bounds every primary component and proves the claim.
\end{proof}

\begin{proof}[Proof of Theorem~\ref{thm:abelian_lattes}]
Since $\alpha$ is nonexceptional, $(ii)\Rightarrow(i)$.

Assume $(i)$. Proposition~\ref{prop:lattes-wandering} gives
\begin{equation}\label{eq:alpha-preperiodic}
 \alpha\in\PrePer(f).
\end{equation}
Its backward orbit is then preperiodic, so
Proposition~\ref{prop:torsion} excludes a non-CM presentation.
Lemma~\ref{lem:source-descent} gives a CM elliptic curve $E/K$,
an endomorphism $a$, and a finite morphism $\theta$ defined over
some finite extension $L/K$, with $f\circ\theta=\theta\circ a$.
Write $F=\End_{\Kbar}(E)\otimes\Q$.

Choose distinct $\beta_i\in f^{-\infty}(\alpha)\cap\P^1(\Kab)$ and
lifts $P_i\in\theta^{-1}(\beta_i)$. The finite-semiconjugacy
criterion above gives
$$
 P_i\in E_{\tors}.
$$
Each fiber is defined over $L\Kab$ and has at most $\deg\theta$
points. Thus
$$
 [\Kab(P_i):\Kab]\le(\deg\theta)[L:K].
$$
The $P_i$ are distinct, so Proposition~\ref{prop:bounded-degree-torsion}
forces $F\subseteq K$.

All endomorphisms of $E$ are now defined over $K$. The adelic image
lies in the abelian CM Cartan~\cite[Theorem~6.6(1)]{Lom17}, hence
\begin{equation}\label{eq:all-torsion-ab}
 E_{\tors}\subseteq E(\Kab).
\end{equation}
For $\sigma\in G_{\Kab}$, both $P_i$ and $\beta_i$ are fixed, so
\[
 \theta^\sigma(P_i)=\sigma(\theta(P_i))
                   =\beta_i=\theta(P_i).
\]
The morphisms $\theta^\sigma$ and $\theta$ agree at infinitely many
points of an integral curve, so they are equal. Thus $\theta$ is
defined over $\Kab$. Equation~\eqref{eq:alpha-preperiodic} gives
$\alpha\in\theta(E_{\tors})$, proving $(iii)$.

Finally, assume $(iii)$ and let $\beta\in f^{-n}(\alpha)$.
The point $\alpha$ is preperiodic, hence so is $\beta$.
Every lift $P\in\theta^{-1}(\beta)$ is therefore torsion.
By~\eqref{eq:all-torsion-ab}, $P\in E(\Kab)$; since $\theta$
is defined over $\Kab$, we have $\beta\in\P^1(\Kab)$.
This proves $(ii)$.
\end{proof}

\begin{proof}[Proof of Theorem~\ref{general_ap}]
Suppose that $f^{-\infty}(\alpha)\cap\P^1(\Kab)$ is infinite.
Choose distinct points $x_i$ in this intersection with
$\iter f{n_i}(x_i)=\alpha$. Since each finite union of levels is
finite, we may assume that $n_i\to\infty$. Then
\[
 \hh f(x_i)=d^{-n_i}\hh f(\alpha)\longrightarrow0.
\]
Theorem~\ref{thm:small} implies that $f$ is exceptional.
If $f$ is a Latt\`es map, Theorem~\ref{thm:abelian_lattes}
gives~\textup{(iii)}.

Otherwise, choose $m\in\operatorname{PGL}_2(\Kbar)$ such that
$g=m\circ f\circ m^{-1}$ belongs to
$\{z^{\pm d},\pm C_d\}$, and put $\beta=m(\alpha)$.
Let $L/K$ be a finite extension over which $m$ is defined.
The points $m(x_i)$ lie in
$g^{-\infty}(\beta)\cap\P^1(L\Kab)$, hence in
$g^{-\infty}(\beta)\cap\P^1(L^{\mathrm{ab}})$.
The point $\beta$ is nonexceptional for $g$, so it is finite.
By~\cite[Lemma~3.3(1) and Remark~2.2]{FOZ24},
$\beta=\zeta$ if $g=z^{\pm d}$, and
$\beta=\zeta+\zeta^{-1}$ if $g=\pm C_d$, for a root of unity
$\zeta$. In either case, the whole backward orbit of $\beta$
is cyclotomic over~$\mathbf Q$.

Choose three distinct $x_i$. These points and their images under
$m$ belong to $\P^1(\Kab)$. A M\"obius transformation is
determined by its values at three distinct points, so
$m\in\operatorname{PGL}_2(\Kab)$. This proves
\textup{(i)} or~\textup{(ii)}.

Conversely, the backward orbits of the standard pairs in
\textup{(i)} and~\textup{(ii)} are cyclotomic, and conjugacy over
$\Kab$ preserves $\P^1(\Kab)$. In case~\textup{(iii)},
Theorem~\ref{thm:abelian_lattes} gives
$f^{-\infty}(\alpha)\subseteq\P^1(\Kab)$.
In each case, the orbit is infinite because $\alpha$ is
nonexceptional.
\end{proof}

\end{document}